 \newtheorem{thm}{Theorem}[section]
 \newtheorem{cor}[thm]{Corollary}
 \newtheorem{lem}[thm]{Lemma}
 \newtheorem{prop}[thm]{Proposition}
 \theoremstyle{definition}
 \theoremstyle{remark}
 \newtheorem{rem}{Remark}
 \numberwithin{equation}{section}
\newcommand{\ason}{A_{n\alpha,n\beta}^{\textnormal{s-o}}}
\newcommand{\askew}{A^{\textnormal {{skew}}}_{\alpha,\beta}}
\newcommand{\xreg}{\mathsf F_{\textnormal {reg}}}
\newcommand{\eso}{E_{\alpha,\beta}^{\textnormal{s-o}}}
\newcommand{\eson}{E_{n\alpha,n\beta}^{\textnormal{s-o}}}
\newcommand{\eskew}{E_{\alpha,\beta}^{\textnormal{skew}}}
\newcommand{\esop}{%\prescript{\perp}{}
E_{\alpha,\beta}^{\textnormal{o-s}}}
\newcommand{\esopn}{E_{n\alpha,n\beta}^{\textnormal{o-s}}}
 \DeclareMathOperator{\cosso}{Cos_{\alpha,\beta}^{\textnormal{s-o}}}
 \DeclareMathOperator{\cossot}{Cos_{\alpha,\beta}^{\textnormal{o-s}}}
  \DeclareMathOperator{\cossotn}{Cos_{\textrm{$n$}\alpha,\textrm{$n$}\beta}^{\textnormal{o-s}}}
 \DeclareMathOperator{\cosson}{Cos_{\textrm{$n$}\alpha,\textrm{$n$}\beta}^{\textnormal{s-o}}}
\DeclareMathOperator{\cskew}{Cos_{\alpha,\beta}^{\textnormal{skew}}}
\DeclareMathOperator{\cweks}{Cos_{\alpha,\beta}^{\textnormal{weks}}}
\DeclareMathOperator{\cskewr}{Cos_{\beta,\alpha}^{\textnormal{skew}}}
\begin{document}

\title[Skew and snapping-out Brownian motions]{Approximation of skew Brownian motion by snapping-out Brownian motions}

%----------Author 1
\author[A. Bobrowski]{Adam Bobrowski}

\address{
Lublin University of Technology\\
Nadbystrzycka 38A\\
20-618 Lublin, Poland}
\email{a.bobrowski@pollub.pl}

\author[E. Ratajczyk]{El\.zbieta Ratajczyk}
\address{
Lublin University of Technology\\
Nadbystrzycka 38A\\
20-618 Lublin, Poland}

\email{e.ratajczyk@pollub.pl}

\newcommand{\cxi}{(\xi_i)_{i\in \N} }
\newcommand{\lam}{\lambda}
\newcommand{\eps}{\varepsilon}
\newcommand{\ud}{\, \mathrm{d}}
\newcommand{\mud}{\mathrm{d}}
\newcommand{\pr}{\mathbb{P}}
\newcommand{\R}{\mathbb{R}}
\newcommand{\e}{\mathrm {e}}
\newcommand{\tif}{\widetilde {f}}
\newcommand{\Id}{{\mathrm{Id}}}
\newcommand{\cic}{C_{\mathrm{mp}}}
\newcommand{\pol}{{\textstyle \frac 12}}
\newcommand{\ft}{{\textstyle \frac 1t}}
\newcommand{\es}{\textnormal T}

\newcommand{\cee}{\mathfrak  C[-\infty,\infty]}
\newcommand{\cod}{\mathfrak C_{\mathrm{odd}}[-\infty,\infty]}
\newcommand{\cev}{\mathfrak C_{\mathrm{even}}[-\infty,\infty]}
\newcommand{\cevr}{C_{\mathrm{even}}(\mathbb{R})}
\newcommand{\codr}{C_{\mathrm{odd}}(\mathbb{R})}
\newcommand{\cez}{C_0(0,1]}
\newcommand{\fod}{f_{\mathrm{odd}}} 
\newcommand{\fev}{f_{\mathrm{even}}} 
\newcommand{\sem}[1]{\mbox{$\left (\e^{t{#1}}\right )_{t \ge 0}$}}
\newcommand{\semi}[1]{\mbox{$\left ({#1}\right )_{t > 0}$}}
\newcommand{\semt}[2]{\mbox{$\left (\e^{t{#1}} \otimes_\varepsilon \e^{t{#2}} \right )_{t \ge 0}$}}
\newcommand{\tr}{\textcolor{red}}
\newcommand{\wt}{\widetilde}

\newcommand{\tcm}{\textcolor{magenta}}
\newcommand{\ecm}{\textcolor{olive}}
\newcommand{\tcb}{\textcolor{blue}}
\newcommand{\dx}{\ \textrm {d} x}
\newcommand{\dy}{\ \textrm {d} y}
\newcommand{\dz}{\ \textrm {d} z}
\newcommand{\di}{\textrm{d}}
\newcommand{\tcg}{\textcolor{green}}
\newcommand{\lc}{\mathfrak L_c}
\newcommand{\ls}{\mathfrak L_s}
\newcommand{\grat}{\lim_{t\to \infty}}
\newcommand{\grar}{\lim_{r\to 1-}}
\newcommand{\graR}{\lim_{R\to 1+}}
\newcommand{\grak}{\lim_{\kappa \to \infty}}
\newcommand{\gra}{\lim_{x\to \infty}}
\newcommand{\grae}{\lim_{\eps \to 0}}
\newcommand{\gran}{\lim_{n\to \infty}}
\newcommand{\rez}[1]{\big (\lam - #1\big)^{-1}}
\newcommand{\papa}{\hfill $\square$}
\newcommand{\papap}{\end{proof}}
\newcommand {\x}{\cerf}
\newcommand{\aex}{A_{\mathrm ex}}
\newcommand{\jcg}[1]{\big ( #1 \big )_{n\ge 1} }
\newcommand{\injtp}{\x \hat \otimes_{\varepsilon} \y}
\newcommand{\pin}{\|_{\varepsilon}}
\newcommand{\mc}{\mathcal}
\newcommand{\inter}{\left [0, 1\right ]}
\newcommand{\ha}{\mathfrak {H}}
\newcommand{\dom}[1]{D(#1)}
\newcommand{\mquad}[1]{\quad\text{#1}\quad}
\newcommand{\lil}{\lim_{\lam \to \infty}}
\newcommand{\lilz}{\lim_{\lam \to 0}}

\newcommand{\ce}{\mathcal C}
\newcommand{\cerl}{\mathfrak C[-\infty,0]}

\newcommand{\cerp}{\mathfrak C[0,\infty]}
\newcommand{\cer}{\mathfrak  C[-\infty,\infty]}
\newcommand{\cerf}{\mathfrak  C(\R_\sharp)}
\newcommand {\xo}{\mathfrak  C_{\textnormal{ov}}(\R_\sharp)}
\newcommand{\cerr}{\mathfrak C_R^\alpha}
\newcommand{\cef}{\mathfrak C_F^\alpha}
\newcommand{\comega}{C_\omega[-\infty,\infty]}
\newcommand{\fo}{f_{\textrm{o}}}
\newcommand{\fe}{f_{\textrm{e}}}
\newcommand{\wh}{\widehat}

%moje
\newcommand{\sq}{\gamma}
\newcommand{\su}{{\alpha+\beta}}
\newcommand{\al}{\alpha}
\newcommand{\be}{\beta}

\newcommand{\yy}{\mathcal{C}_{\al,\be}^{\textnormal{skew}}}
\newcommand{\zzz}{\mathcal{C}_{\al,\be}^{\textnormal{weks}}}
\newcommand{\y}{\mathcal{C}_{\al,\be}^{\textnormal{s-o}}}
\newcommand{\yskew}{\mathcal{C}_{\al,\be}^{\textnormal{skew}}}
\newcommand{\z}{\mathcal{C}_{\al,\be}^{\textnormal{o-s}}}
\renewcommand{\q}{P_{\al,\be}^{\textnormal{o-s}}}
\newcommand{\qn}{P_{n\al,n\be}^{\textnormal{o-s}}}
\newcommand{\qweks}{P_{\al,\be}^{\textnormal{weks}}}
\newcommand{\p}{P_{\al,\be}^{\textnormal{s-o}}}
\newcommand{\pn}{P_{n\al,n\be}^{\textnormal{s-o}}}
\newcommand{\pskew}{P_{\al,\be}^{\textnormal{skew}}}

\newcommand{\fl}{f_{\ell}}
\newcommand{\fr}{f_{\textnormal r}}
\newcommand{\fle}{f_1}
\newcommand{\fre}{f_2}
\newcommand{\wfl}{\widetilde{\fl}}
\newcommand{\wfr}{\widetilde{\fr}}
\newcommand{\gl}{g_1}
\newcommand{\gr}{g_2}
\newcommand{\gle}{g_1}
\newcommand{\gre}{g_2}
\newcommand{\hl}{h_{\textnormal l}}
\newcommand{\hr}{h_{\textnormal r}}
\newcommand{\apb}{{\alpha+\beta}}
\newcommand{\Ff}{\mathfrak{F}}
\newcommand{\Ffe}{k}
\newcommand{\F}{\mathcal{F}}
\newcommand{\mab}{M_{\alpha,\beta}}

\newcommand{\rla}{R_\lam}

\newcommand{\fh}{h} % oznaczenie na funkcje pomocnicze funkcje h w rozdziale 3.1
\newcommand{\ph}{\phi} % oznaczenie na funkcje (dawniej oznaczanie przez h) zdefiniowane w lemacie 3.2

\newcommand{\ced}{C_{\textnormal{D}}}

\makeatletter
\newcommand{\normt}{\@ifstar\@normts\@normt}
\newcommand{\@normts}[1]{%
  \left|\mkern-1.5mu\left|\mkern-1.5mu\left|
   #1
  \right|\mkern-1.5mu\right|\mkern-1.5mu\right|
}
\newcommand{\@normt}[2][]{%
  \mathopen{#1|\mkern-1.5mu#1|\mkern-1.5mu#1|}
  #2
  \mathclose{#1|\mkern-1.5mu#1|\mkern-1.5mu#1|}
}
\makeatother

\thanks{Version of \today}
%----------Au
%----------classification, keywords, date
\subjclass{35B06, 46E05, 47D06, \\ 47D07, 47D09}
 \keywords{skew and snapping-out Brownian motion, invariant subspaces, projection, complemented spaces, transmission conditions}

\begin{abstract} We elaborate on the theorem saying that as permeability coefficients of snapping-out Brownian motions tend to infinity in such a way that their ratio remains constant, these processes converge to a skew Brownian motion. In particular, convergence of the related semigroups, cosine families and projections is discussed. 
\end{abstract}

\maketitle

\section{Introduction}

\subsection{Skew Brownian motion}% and its approximations}
 As described in \cite{lejayskew} `The skew Brownian Motion appeared in the '70 in \cite{ito,walsh} as a natural generalization of the Brownian motion: it is a process that behaves like a Brownian motion except that the sign of each excursion is chosen using an independent Bernoulli random variable'. This characteristic property of skew Brownian motion is expressed in the boundary conditions 
\begin{equation}\label{intro:1} f''(0+)=f''(0-) \mquad{ and } \beta f'(0-)=\alpha f'(0+),\end{equation}
where  $\alpha $ and $\beta$ are  non-negative parameters such that $\alpha+\beta>0$.
More precisely, the skew Brownian motion is an honest Feller process on $\R$, and as such can be described by means of 
its generator, that is, a Laplace operator, say, $\askew$, in the space $\cer$ ($[-\infty,\infty]$ is a two-point compactification of $\R$) of continuous functions on $\R$ that have finite limits at $\infty$ and $-\infty$. The domain of $\askew$ is composed of functions satisfying the following three conditions: 
\begin{itemize}
\item[(a)] $f$ is twice continuously differentiable in both
  $(-\infty,0]$ and $[0,\infty)$, separately, with left-hand and
  right-hand derivatives at $x=0$, respectively, 
\item[(b)] both the limits $\lim_{x\to \infty} f'' (x)$ and
  $\lim_{x\to -\infty} f''(x) $ exist and are finite {(it follows that, in fact, they
  have to be equal to $0$)}, and
 \item[(c)] conditions \eqref{intro:1} hold; note that the first of them implies that, although $f'(0)$ need not exist, it is meaningful to speak of $f''(0)$.  
\end{itemize}
Furthermore, we
   define \[ \askew f \coloneqq f'' .\]
Even though, for reasons that will become obvious in what follows, it is convenient to consider all non-negative parameters $\alpha $ and $\beta$ in \eqref{intro:1} (such that $\alpha+\beta>0$), their probabilistic meaning is more clear when they are normalized by dividing both of them  by $\alpha + \beta$. Then the new $\alpha$ is the probability that the sign of excursion is chosen to be positive, and the new $\beta$ is the probability that the sign is negative.

% \[ \alpha f'(0-) = \beta f'(0+) \]

 \newcommand{\aso}{A_{\al,\be}^{\textnormal{s-o}}}
 \newcommand{\aos}{A_{\al,\be}^{\textnormal{o-s}}}
 
 The paper by A.~Lejay cited above discusses a number of constructions of the skew Brownian motion that appeared in the literature since the process was first discovered,  and various contexts in which this process and its generalizations are studied; see also \cite{yor97}*{p. 107}  and \cite{manyor}*{pp. 115--117}. It is well-known, for example, that skew Brownian motion can be obtained in the limit procedure of Friedlin and Wentzell's averaging principle \cite{fwbook}---see \cite{fw}*{Thm. 5.1}, comp. \cite{emergence}*{Eq. (3.1)}. Much more recently, in \cite{abtk}, a link has been provided between skew Brownian motion and kinetic models of motion of a phonon involving an interface, of the type studied in
\cite{tomekkinetic,tomekkinetic3,tomekkinetic2,tomekkinetic1}, and the telegraph
process with elastic boundary at the origin \cite{wlosi,wlosi1}.

\subsection{Snapping-out Brownian motion}\label{sec:sobm}

The goal of this paper is to use the recent results of \cite{aberman} to strengthen the limit theorem of \cite{tombatty} (see also \cite{knigaz}*{Chapter 11}) saying that the skew Brownian motion can be obtained as a limit of \emph{snapping out Brownian motions}.  
   
The snapping out Brownian motion is a diffusion on two half-lines,  $(-\infty,0)$ and $(0,\infty)$, separated by a semi-permeable membrane located at~$0$. It can be described by a Feller semigroup of operators in the space \[\cerf \] of continuous functions on 
\[ \R_\sharp\coloneqq  [-\infty,0-] \cup [0+,\infty] \]
where $0-$ and $0+$ are two distinct points, representing 
positions to the immediate left and to the immediate right of the membrane; alternatively, members of $\cerf$ 
can be seen as continuous functions on $(-\infty,0)\cup(0,\infty)$ that have finite limits at $\pm \infty$ and one-sided finite limits at $0$. The membrane, in turn, is characterized by two non-negative parameters, say, $\alpha$ and $\beta$, describing its permeability for a particle diffusing from the left to the right and from the right to the left, respectively. More precisely, given such  $\al$ and $\be$, we define the generator of the snapping out Brownian motion as follows: it is the operator $\aso$ in $\x$  given by 
 \[ \aso f \coloneqq f'', \]
on the domain composed of twice continuously differentiable $f \in \x $ such that $f', f''\in \x$ and 
\begin{align}\label{intro:2}\begin{split}
f'(0-)&=\al(f(0+)-f(0-)), \\
f'(0+)&=\be(f(0+)-f(0-)). 
\end{split}\end{align}

These \emph{transmission conditions} are well-known in the literature and can be plausibly interpreted: according to Newton's Law of Cooling, the temperature at $x=0$ changes at a rate proportional to the difference of temperatures on  
either sides of the membrane, see \cite[p. 9]{crank}. In this context, J. Crank uses the term \emph{radiation boundary condition}. %(Although, strictly speaking, these are not \emph{boundary}, but \emph{transmission} conditions, see \cite{lions1,lions2,lions3}.) 
In the context of passing or diffusing through membranes, analogous transmission conditions were introduced by J.E.~Tanner \cite[eq. (7)]{tanner}, who studied diffusion of particles through a sequence of permeable barriers (see also Powles et al. \cite[eq. (1.4)]{powles}, for a continuation of the subject).  In \cite{andrews} (see e.g. eq. (4) there) similar conditions are used in describing absorption and desorption phenomena. We refer also to \cite{fireman}, where a~compartment model with permeable walls (representing e.g., cells, and axons in the white matter of the brain in particular) is analyzed, and to eq. [42] there. Among more recent literature involving relatives of \eqref{intro:2}, one should mention the model  of inhibitory synaptic receptor dynamics of P.~Bressloff
\cite{bressloff2023}; see also \cite{bressloff2022} and references given in these papers.

In \cite{bobmor} (see also \cite{knigaz}*{Chapters 4 and 11}), relations \eqref{intro:2} were put in the context of Feller--Wentzel boundary conditions, and this allowed describing the related process by means  of the \emph{L\'evy local time}. Later, in 
\cite{lejayn}, a stochastic construction was provided, and the term \emph{snapping out Brownian motion} was coined. To summarize these findings, we note that conditions \eqref{intro:2} are akin to the 
elastic barrier condition: An elastic Brownian motion on $\R^+:=[0,\infty)$ (see e.g. \cite{ito,karatzas}) is the process with generator 
$ Gf = \frac 12 f'' $
defined on the domain composed of $f\in \mathfrak{C}^2[0,\infty]$ satisfying the \emph{Robin boundary condition} (known also as \emph{elastic barrier condition}): \[f'(0+)=\beta f(0+).\]  In this process, the state-space is $\R^+$, and each particle performs a standard Brownian motion while away from the barrier $x=0.$ Each time a particle touches the barrier it is reflected, but its time spent at the boundary, the {local time} mentioned above, is measured and after an exponential time with parameter $\beta$ with respect to the local time, the particle is killed and no longer observed. The second condition in \eqref{intro:2} expresses the fact that in the snapping out Brownian motion, each particle diffusing on the right half-axis, instead of being killed after the time described, is transferred to the other side of the membrane $x=0$. In other words, the particle after some time spent `at' the right side of the boundary, permeates to the the left side. Similarly, a particle on the left side of the boundary permeates through the membrane after a random time characterized by the coefficient $\alpha$. In particular, we see that $\alpha$ and $\beta$ are permeability coefficients: the larger they are the shorter is the time for particles to diffuse through the membrane.  

\subsection{Approximation of skew B.M. by snapping out B.M.}
Since $\alpha$ and $\beta$ in \eqref{intro:2} are permeability coefficients, it may be thought reasonable that letting them to be infinite should lead to the case in which the membrane is completely permeable, that is, to the standard Brownian motion. As it turns out, however, the matter is not so simple: in the limit some kind of asymmetry in the way particles pass to the left and to the right through the membrane remains. 

To explain this phenomenon in more datail, let us replace $\alpha$ and $\beta$ in the transmission conditions \eqref{intro:2} by $n\alpha$ and $n\beta$, respectively, and let $n \to \infty$. Heuristically, it is then clear that the limit conditions should read \[ f(0+)=f(0-) \mquad{ and } \beta f'(0-)=\alpha f'(0+).\]
The first of these relations tells us that we should work with $f\in \x$ that are continuous at $x=0$, that is with $f\in \cer$, and the other is precisely the second of  conditions that characterize the skew Brownian motion, see \eqref{intro:1}. Hence, we anticipate a theorem saying that a skew Brownian motion is a limit of snapping out Brownian motions, provided that permeability coefficients $\alpha$ and $\beta$ of the semi-permeable membrane converge to infinity whereas their ratio remains constant.

As we know from \cite{tombatty}, these intuitions can be transformed into a formal theorem saying that 
for any $s>0$, 
\begin{align}\label{intro:3} \gran \sup_{t\in [0,s]}\big \|\e^{t \ason}f - \e^{t\askew}f\big\| = 0, \qquad f \in \cer, \end{align}
where  $\cer$ is seen as the subspace of $\x$ composed of functions $f$ such that $f(0-)=f(0+)$, $\{\e^{t\aso},\, t\ge 0 \}$ is the Feller semigroup describing snapping-out Brownian motion, and $ \{\e^{t\askew}, \,t\ge 0\}$ is that describing the skew Brownian motion. In view of the Trotter--Sova--Kurtz--Mackevi\u cius theorem \cite{kallenbergnew}, this result can be seen as expressing a convergence of the random processes involved. 

\subsection{The goal of the paper}
The goal of the paper is to strengthen the result of \cite{tombatty} by 
exhibiting a number of phenomena accompanying convergence \eqref{intro:3}. These can be summarized as follows. 

\begin{itemize}
\item [(A)] \emph{Convergence of cosine families.} %\textrm
As proved in \cite{tombatty}, each $\aso$ generates not only a semigroup but also a cosine family, and the latter is in a sense more fundamental than the semigroup. We will show that on $\cer$ (but not outside of this space) not only the semigroups generated by $\ason$ converge; so do also the related cosine families. Moreover, the limit is in fact uniform with respect to $t\in \R$, and for Lipschitz continuous functions the rate of convergence can be estimated---see Section \ref{cosf}, and Theorem \ref{thm:skew} (b) and Remarks \ref{rem:one} and \ref{rem:rate} in particular.

\item [(B)]\emph {Convergence of semigroups outside of $\cer$.} As a consequence of (A), for $f\not \in \cer $ the limit $\gran \e^{t\ason}f $ still exists for all $t>0$, but is merely uniform with respect to $t$ in compact subintervals of $(0,\infty)$---see Theorem \ref{thm:skew} (c).

\item [(C)] \emph{Asymptotic behavior.} We will show that the cosine families generated by $\aso$ have the same time averages as those generated by $\askew$---see Section \ref{sec:ab}.

\item [(D)]  \emph {Convergence of projections.} It is also proved in \cite{aberman} that each transmission condition \eqref{intro:2} shapes unequivocally an extension of each member of $\cerf$ to a pair of members of $\cer$, that is, to a member of the Cartesian product
\[ \ce \coloneqq (\cer)^2. \]
The subspace $\y\subset \ce $ of such extensions, besides being invariant under the Cartesian product basic cosine family in $\ce$ (see \eqref{intro:7}), is \emph{complemented}. Therefore, there are related projections $\p \in \mc L(\ce)$ on $\y$. We prove that 
 $\gran \pn$ exists and is a projection on the subspace of extensions unequivocally shaped by the skew Brownian motion boundary condition \eqref{intro:1}. See Section \ref{sec:cop}.

\item [(E)] \emph{Convergence of complementary cosine families and semigroups.}  
As also proved in \cite{aberman}, the complementary subspace of $\y$, say, $\z$, in $\ce$ is unequivocally shaped by certain transmission conditions too. As a result there are complementary cosine families and semigroups to those generated by  $\aso$. We prove that these cosine families and semigroups converge similarly as those generated by $\aso$. 
Surprisingly, the limit semigroup and the limit cosine family turn out to be isomorphic copies of the semigroup and cosine family related to another skew Brownian motion, with the role of coefficients $\alpha$ and $\beta$ reversed---see Section \ref{sec:cocf}. 
\end{itemize}

Additional results completing the paper are presented in Section \ref{sec:ar}.

\subsection{Commonly used notation}\label{cun} 
\subsubsection{Constants $\alpha$ and $\beta$}
Throughout the paper we assume that $\alpha$ and $\beta$ are fixed non-negative constants. Also, to shorten formulae, we write 
\begin{align*} 
\sq\coloneqq \sqrt{2(\al^2+\be^2)}.
\end{align*}
Since the case of $\alpha=\beta=0$ is not interesting, in what follows we assume that ${\alpha+\beta}>0$, implying that $\gamma>0$ as well. 
\subsubsection{Limits at infinities, and transformations of functions} For $f\in \x$, we write 
\[f(\pm \infty)\coloneqq \lim_{x\to \pm \infty} f(x).\]
Moreover, we define
$ f^\es, f^e $ and $f^o$, also belonging to $\x$, by 
\begin{align} %\label{proj:3}  
f^\es (x)& \coloneqq  f(-x),   \qquad  x \in \R{\setminus \{0\}}, \nonumber  \end{align} 
and \begin{align} f^e  &\coloneqq \pol (f + f^\es), \qquad  f^o  \coloneqq \pol (f - f^\es ). \label{elek}%\nonumber 
\end{align}
%so that \begin{align}
%(f^\es)^\es &= f, \quad (f^o)^\es = (f^\es)^o= -f^o \mquad { and } (f^e)^\es =(f^\es)^e= f^e. \label{intro:T}
%\end{align}
%If $f \in \cer$, then these definitions naturally extend to $x=0$ as well.

%We note that 
%if we know that $f,g\in \mathfrak C(\R)$ coincide on $\R^+$, then to prove that they are in fact equal to each other, it suffices to check that also $f^\es$ and $g^\es$ coincide on $\R^+$  (see e.g Lemmas \ref{lem:ab1} and \ref{lem:ab2}).  
%Furthermore, 
%the basic cosine family, treated as a family of operators in $\mathfrak C(\R)$, commutes with the operations described above, that is, \begin{equation*} (C(t)f)^o = C(t)f^o, \,   (C(t)f)^e = C(t)f^e, \, (C(t)f)^\es = C(t)f^\es, \,\,  f\in \mathfrak C(\R), t \in \R. \end{equation*}

%\subsubsection{Laplace transform} By a slight abuse of notation, for $f\in \mathfrak C(\R)$, 
%we write 
%\[ \wh f (\lam ) \coloneqq  \int_0^\infty \e^{-\lam x} f(x)\ud x, \]
%even though, in fact, the right-hand side here is not the Laplace transform of $f$ but of its restriction to the right half-axis. The integral featured above makes sense as long as, for example, there are $M$ and $\omega \ge0 $ such that $|f(x)|\le M\e^{\omega x}, x\ge 0$; then, $\wh f(\lam)$ is well-defined for $\lam >\omega$. 
%This will be the case in all examples considered in the paper. 

%We note that an $f\in \x $ (and, in particular, $f\in \cer$) is fully characterized by two Laplace transforms: $\wh f$ and $\wh {f^\es}$. This is because the Laplace transform, as restricted to continuous functions, is injective. 

\subsubsection{Convolution with exponential function} For $a>0$ we introduce
\[ e_a(x)\coloneqq \e^{-ax},\qquad x\in \R^+, \]
 and for $f\in \x$, we write \[ e_a *f  (x) \coloneqq \int_0^x \e^{-a(x-y)} f(y) \ud y , \qquad x \in \R^+.\]
%We have then $f*g=g*f$  and  \begin{equation}\label{intro:T1} - (f *g)^\es = f^\es * g^\es . \end{equation} Furthermore, $e_a, a \in \mathbb R$, introduced above, satisfy the Hilbert equation \begin{equation}\label{hilbert} e_a - e_b = (b-a)e_a * e_b.\end{equation}

\subsubsection{The restriction operator}\label{tro}
The following restriction operator, denoted $R$ and mapping  $\ce$ onto the space $\x$ of Section \ref{sec:sobm}, will be of key importance in the entire paper. By definition $R$ assigns to a  pair $(f_1,f_2)\in \ce$ the member $f$ of $\x$ given by 
\[ f(x) = f_1(x), \text{ for } x<0  \mquad { and } f(x) = f_2(x), \text{ for } x >0. \]

\section{Convergence of cosine families and semigroups describing snapping-out Brownian motions}\label{cosf} 

Before our main theorem in this section is presented we need to explain the nature of the result it generalizes, i.e., to put \eqref{intro:3} in the context of general theory of convergence of semigroups.  Also, in Section \ref{sec:cf} we recall the notion of cosine family.

\subsection{Theory of convergence of semigroups of operators}

The main idea of the Trotter--Kato--Neveu convergence theorem \cite{abhn,engel,goldstein,pazy}, a cornerstone of the theory of convergence of semigroups \cite{knigaz,bobrud}, is that convergence of resolvents of \emph{equibounded} semigroups in a Banach space $\mathsf F$, gives an insight into convergence of the semigroups themselves. Hence, in studying the limit of semigroups, say, $\{\e^{tB_n}, t \ge 0\}, n\ge 1$, generated by the operators $B_n, n \ge 1$ we should first establish the existence of 
the strong limit 
\[ \rla \coloneqq \gran \rez{B_n}. \]
The general theory of convergence (see \cite{knigaz}*{Chapter 8}) covers also the case in which, unlike in the  classical version of the Trotter--Kato--Neveu theorem, the (common) range of the operators $\rla, \lam >0$  so-obtained is \emph{not} dense in  $\mathsf F$, and stresses the role of  the so-called \emph{regularity space}, defined as the closure of the range of $\rla$: 
\begin{equation}\label{skew:reg} \xreg \coloneqq cl (Range \rla) \subset \mathsf F. \end{equation}
Namely, $\xreg$ turns out to be composed of $f\in \mathsf F$ such the limit $T(t)f \coloneqq \gran \e^{tB_n}f $ exists and is uniform with respect to $t$ in compact subintervals of $[0,\infty)$; then $\{T(t),\, t \ge 0\}$ is a strongly continuous semigroup in $\xreg$.

Condition \eqref{intro:3} (more precisely, the theorem proved in \cite{tombatty}) is thus a  typical result of convergence theory: it characterizes the regularity space for $B_n\coloneqq \ason$ as equal to $\cer \subset \x$, and identifies $\{T(t),\, t\ge 0\}$ as $\{\e^{\askew}, t \ge 0\}$. It should be stressed, though, that this statement does not exclude the possibility of the existence of $f\not \in \cer$ such that $\jcg{\e^{t\ason}f}$ converges for all $t>0$. Such \emph{irregular} convergence of semigroups, which is known to be always uniform with respect to $t$ in compact subsets of $(0,\infty)$ --- see \cite{note} or \cite{knigaz}*{Thm 28.4} --- is not so uncommon, especially in the context of singular perturbations \cite{banmika,banalachokniga,knigaz}, but needs to be established by different means. We will prove that the limit $\gran \e^{t\ason}f$ exists for all $t>0$ and $f\in \x$, by examining the related cosine families. 

\subsection{Cosine families}\label{sec:cf} 
A strongly continuous family $\{C(t), t \in \R\}$ of operators in a Banach space $\mathsf F$ is said to be a cosine family iff $C(0)$ is the identity operator and 
\[ 2 C(t) C(s) = C(s+t) + C(t-s), \qquad t,s\in \R.\]
The generator of such a family is defined by 
\[ Af = \lim_{t\to 0} 2t^{-2}{(C(t)f- f)}\]
for all $f \in \mathsf F$ such the limit on the right-hand side exists. 
For example, in $\cer$ there is the \emph{basic cosine family} given by  
\begin{equation}\label{intro:0} C(t) f(x) = \pol [ f(x+t) + f(x-t) ], \qquad x \in \R, t \in \R.\end{equation}
Its generator is the  one-di\-men\-sion\-al Laplace operator $f\mapsto f''$ with domain composed of twice continuously differentiable functions on $\R$ such that $f'' \in \cer$. 

Each cosine family generator, say, $A$, is automatically the generator of a strongly continuous semigroup (but not vice versa). The semigroup  such $A$ generates is given by the \emph{Weierstrass formula} (see e.g. \cite{abhn}*{p. 219})
\begin{equation}\label{cosf:1}T(0)f =f    \mquad {and} T(t) f  = {\textstyle \frac 1{2\sqrt{\pi t}}} \int_{-\infty}^\infty \e^{-\frac {s^2}{4t}} C(s) f  \ud s, \qquad t >0, f \in \mathsf F.\end{equation} 
This formula expresses the fact that the cosine family is then a more fundamental object than the semigroup, and properties of the semigroup can be hidden in those of the cosine family.

In our main theorem of this section, we will show that \eqref{intro:3} can be strengthened by proving that not only the semigroups generated by $\aso$ converge but so also do the related cosine families. The fact that each $\aso$ is a generator of a cosine family was proved in \cite{tombatty}, but from the analysis presented there it is not clear that the cosine families generated by $\aso$'s are equibounded or that they do converge (see Remark 6.1 in that paper).  
\subsection{Convergence theorem for solution families}\label{sec:tfct}

We are finally ready to present our main theorem in this section. 

\begin{thm}\label{thm:skew} \ \ 
\begin{itemize}
\item [(a)] The cosine families $\{ \cosso (t), \,t \in \R\}$ generated by $\aso$ are equibounded: 
\begin{equation} \|\cosso (t)\| \le 5, \qquad \alpha,\beta\ge 0, t \in \R.\label{skew:2}\end{equation}
\item [(b)] We have 
\begin{equation}\label{skew:3} \gran \sup_{t \in \R}\big \| \cosson (t)f - \cskew (t) f\big\| =0, \qquad  f \in \cer, \end{equation}
 where $\{\cskew (t), \, t \in \R\}$ is the cosine family generated by $\askew$.  For $f \not \in \cer$, this limit exists for no $t\not =0$. 
 \item [(c)] The limit
\begin{equation}\label{skew:4} \gran \e^{t\ason} f \end{equation}
exists for all $t>0$ and $f\in \x,$ and for $f\not \in \cer $ it is uniform with respect to $t $ in compact subintervals of $(0,\infty)$. \end{itemize} 
\end{thm}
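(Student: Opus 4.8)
My plan is to treat all three parts through a single device: the representation, furnished by \cite{aberman}, of each cosine family as the Cartesian product of two basic cosine families \eqref{intro:0}, read off its complemented invariant subspace. Write $\mathbf C(t)(g_1,g_2)=(C(t)g_1,C(t)g_2)$ for that product acting on $\ce$, and let $\mathbf T(t)$ be the semigroup it generates through the Weierstrass formula \eqref{cosf:1}, i.e.\ the product of two Gauss--Weierstrass semigroups; both $\mathbf C(t)$ and the restriction $R$ are contractions, since $\|C(t)\|\le 1$ and $R$ only restricts the supremum. The extension operators $\eso$ and $\eskew$ of \cite{aberman} map into the $\mathbf C$-invariant subspaces $\y$ and $\yskew$, and there $R$ inverts them; a short computation using $\eso R=\Id$ on $\y$ shows that $R\,\mathbf C(t)\,\eso$ is a cosine family with generator $\aso$, so that
\begin{equation*} \cosso(t)=R\,\mathbf C(t)\,\eso \mquad{and} \cskew(t)=R\,\mathbf C(t)\,\eskew. \end{equation*}
Part (a) is then immediate: $\|\cosso(t)\|\le\|R\|\,\|\mathbf C(t)\|\,\|\eso\|\le\|\eso\|$, so \eqref{skew:2} follows once $\|\eso\|\le 5$ is read off the explicit form of the extension operator; applied with $(\alpha,\beta)$ replaced by $(n\alpha,n\beta)$ this is exactly the equiboundedness used below.

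For \textbf{(b)} I subtract the two identities: for $f\in\cer\subset\x$,
\begin{equation*}\cosson(t)f-\cskew(t)f=R\,\mathbf C(t)\,(\eson-\eskew)f,\end{equation*}
whence $\sup_{t\in\R}\|\cosson(t)f-\cskew(t)f\|\le\|(\eson-\eskew)f\|_{\ce}$, so the uniformity in $t$ claimed in \eqref{skew:3} is automatic and the whole assertion reduces to $\gran\|(\eson-\eskew)f\|_{\ce}=0$. Inspecting the extensions, the only obstruction to convergence is a transition layer, concentrated near the membrane and of amplitude proportional to the jump $f(0+)-f(0-)$; for $f\in\cer$ this jump vanishes and the convergence is uniform. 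For $f\not\in\cer$ the jump is nonzero, the layer has width of order $1/n$ and does not flatten; transported along the characteristics by $\mathbf C(t)$ it produces, for every $t\not=0$, a front of fixed height and vanishing width located at $x=\pm t$. Hence $\jcg{\cosson(t)f}$ is not Cauchy in the supremum norm, and the limit exists for no $t\not=0$.

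For \textbf{(c)} I feed the first identity into \eqref{cosf:1} and pull the bounded operator $R$ through the integral to get $\e^{t\ason}f=R\,\mathbf T(t)\,\eson f$ for $t>0$ and $f\in\x$. By part (a) the family $\eson f$ is bounded in $\ce$ uniformly in $n$, and by the layer analysis it converges pointwise. Whereas $\mathbf C(t)$ does not upgrade pointwise to uniform convergence (this is precisely what fails in (b)), the Gauss--Weierstrass semigroup $\mathbf T(t)$ does for every $t>0$: realized as convolution against the Gaussian kernel, dominated convergence gives convergence at each point, while the smoothing makes $\{\mathbf T(t)\eson f\}_n$ equi-Lipschitz with constant of order $t^{-1/2}$, and, together with control of the limits at $\pm\infty$, this turns pointwise into uniform convergence in $x$. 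This yields the existence of \eqref{skew:4} for all $t>0$ and $f\in\x$; since both the equi-Lipschitz constant and the domination are uniform for $t$ in any $[t_0,t_1]\subset(0,\infty)$, the convergence is uniform on compact subintervals of $(0,\infty)$.

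The principal difficulty is the fine control of the $n$-dependence of $\eson$: both the uniform norm bound underlying (a) and, more delicately, the precise description of the boundary layer separating continuous from discontinuous data. Once this layer is shown to have width $O(1/n)$ and height controlled by the jump, parts (b) and (c) become two sides of one phenomenon---propagation along characteristics destroys uniform convergence of the cosine families, while Gaussian averaging restores it for the semigroups at positive times. Establishing these quantitative properties of $\eson$ from the formulas of \cite{aberman} is the technical heart of the argument.
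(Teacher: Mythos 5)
Your overall architecture for (a) and (b) coincides with the paper's: both rest on the abstract Kelvin formula $\cosso(t)=R\,\ced(t)\,\eso$ (Propositions \ref{tfct:prop1}--\ref{tfct:prop2}, quoted from \cite{aberman} and \cite{abtk}), so that (a) reduces to $\|\eso\|\le 5$ and (b) reduces to $\gran\eson f=\eskew f$ in $\ce$, with the negative half of (b) obtained exactly as you describe, from the fact that the pointwise limit of $\eson f$ has a jump at the membrane proportional to $f(0+)-f(0-)$, which $\ced(t)$ transports to $x=\pm t$, so the would-be uniform limit is discontinuous and hence not in $\x$. The one substantive thing you assert without proof is precisely the convergence $\eson f\to\eskew f$: the paper carries this out via an explicit approximate-identity lemma (Lemma \ref{lem:dirnew}), showing that $n(\al+\be)\,e_{n(\al+\be)}*\phi\to\phi$ uniformly on $\R^+$ for $\phi\in\cerp$, applied to $\phi(x)=f(x)-f(-x)$ (which vanishes at $0$ exactly when $f\in\cer$). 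You correctly flag this as the technical heart, but your ``transition layer of width $O(1/n)$'' description is a heuristic standing in for that computation, so your write-up is an outline rather than a proof at this point.

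For (c) you take a genuinely different route. The paper invokes the cosine-family analogue (from \cite{odessa}) of Kisy\'nski's theorem: resolvent convergence of the equibounded $\ason$ implies convergence of $\int_{-\infty}^{\infty}\Psi(s)\cosson(s)f\,\rd s$ for even integrable $\Psi$, of which the Weierstrass integral is a special case; uniformity in $t$ on compacts of $(0,\infty)$ is then quoted from \cite{note}. You instead push the Kelvin formula through the Weierstrass integral to get $\e^{t\ason}f=R\,\mathbf T(t)\,\eson f$ and argue directly that Gaussian convolution upgrades the bounded, pointwise (indeed uniform away from $0$) convergence of $\eson f$ to uniform convergence in $x$. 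This is more elementary and self-contained, and it buys an explicit identification of the limit semigroup as $R\,\mathbf T(t)\,(\lim_n\eson f)$, which the abstract route does not give so transparently. Two small repairs are needed: the claim that equi-Lipschitz bounds of order $t^{-1/2}$ plus pointwise convergence yield uniformity \emph{in $t$} on $[t_0,t_1]$ is not immediate as stated; the cleaner estimate is to split the Gaussian integral into a $\delta$-neighbourhood of the membrane (contributing $O(\delta/\sqrt{t_0})$ uniformly in $n$, $x$ and $t\ge t_0$) and its complement, where $\eson f$ converges uniformly. With that, your argument is sound.
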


Information on the rate of convergence will be given in Remark \ref{rem:rate} further down.

\begin{rem}\label{rem:one} Thesis (b) and the Weierstrass formula imply 
that the limit in  \eqref{intro:3} is in fact uniform in $t\ge 0$. To see this, let $f\in \cer $ and $\eps >0$ be fixed and let $n_0$ be so large that 
\[\big \| \cosson (s) f - \cskew  (s)f\big \|\le  \eps \]
for $n\ge n_0$ and $s\in \R$. Then, for any $t>0$,
\begin{align*} \big\|\e^{t\ason} f - \e^{t\askew} f\big\|&\le  {\textstyle \frac 1{2\sqrt{\pi t}}} \int_{-\infty}^\infty \e^{-\frac {s^2}{4t}}\big \|\cosson (s) f  -  \cskew (s) f\big \| \ud s \\
 &=  {\textstyle \frac 1{2\sqrt{\pi t}}} \int_{-\infty}^\infty \e^{-\frac {s^2}{4t}} \eps \ud s =\eps, 
 \end{align*} 
(and for $t=0$ the left hand side is $0$). Hence, (b) improves \eqref{intro:3}.% obtained in \cite{tombatty}. 
\end{rem}

Our proof of Theorem \ref{thm:skew} hinges on the representation of cosine families $\{\cosso (t), t\in \R\}$ and $\{\cskew (t), t \in \R\}$ by means of the \emph{basic Cartesian product cosine family} \[ \{\ced(t),\, t\in \R\} \] (`D' for `Descartes') defined in $\ce$ by the formula 
\begin{equation}\label{intro:7} \ced (t) (f_1,f_2) = (C(t)f_1,C(t)f_2), \qquad t\in \R, f_1,f_2 \in \cer ,\end{equation}
where $C(t)\in \mc L(\cer)$ are introduced in \eqref{intro:0}. The following two propositions were obtained in \cite{aberman}*{Section 2} and \cite{abtk}*{Section 6}, respectively. Extension operators featured in these propositions are illustrated in our Figure~\ref{rys3}.

\begin{figure}
 \begin{tikzpicture}[scale=0.7]
 \draw [->] (-5,0) -- (5.2,0);
\draw [->] (0,-1.1)-- (0,2);
\draw[semithick,domain=0:5, samples=60] plot (\x, {0.3+exp(-\x)*cos(deg(3*\x))});
\draw[dashed,thick, domain=-5:0, samples=60] plot (\x, {0.3+3*exp(\x)- 2*exp(2*\x)-\x*exp(\x)}); 
\draw [ semithick] plot [smooth] coordinates {(-5,-0.75) (-4,0.-0.73)  (-3,-0.2) (-2,0.35) (-0.7,-0.2) (0,-0.15)};
\draw [thick, dashed]  plot [smooth] coordinates {(0,-0.15) (0.5,-0.1) (1,-0.5)  (2,-0.8)  (3,-0.5) (4,-1)(5,-1)};
%\node [above] at (1.0,-0) {$f$};
\node [above] at (2.9,-1.3) {$\wfl $};
\node [below] at (-1.4,2.1) {$ \wfr$};
\end{tikzpicture}
\caption{Two extensions of a single function $f \in \x$ (solid lines): extension $\wfl$  of its left part, and extension $\wfr$ of its right part. }\label{rys3} 
\end{figure}

 \begin{prop} \label{tfct:prop1} The cosine family $\{\cosso (t),t \in \R\}$ in $\x$ generated by  $\aso$  is given by the abstract Kelvin formula 
\begin{equation}\label{ab:6} \cosso (t) =R \ced (t) \eso, \qquad t \in \R. \end{equation}
Here, (a) $R$ is the restriction operator of Section \ref{tro}, and (b) 
the extension operator $\eso\colon\x \to \ce $ maps an $ f \in \x$ to the pair $(\wfl,\wfr)\in \ce$ of 
 extensions of its left and right parts determined  by 
\begin{align}%\label{proj:1}
 \widetilde {\fl}(x)&= \begin{cases*}f(x),& $x<  0$,  \\ 
 f(-x)+2\al  e_{\su}* [f-f^\es] (x), & $x> 0,$  \end{cases*} \label{needed}
 \end{align}
and %\tcm{Dlaczego nie piszemy $ \widetilde {f_\ell}(x)=f(x), x \le 0-$ i podobnie ni�ej? Bo tutaj rozszerzamy nie funkcje z \ce ale z \x, a taka funkcja nie ma zdefiniowanego f(0)  \}
\begin{align}%\label{proj:1a}
  \widetilde {\fr}(x)&= \begin{cases*} f(-x)- 2\be  e_\su  * [f-f^\es](-x) , & $x< 0$,\\
 %f(0+),&  $x=  0$,\\
f(x),&  $x>0$. \end{cases*} \label{ajednak} \end{align} 
%where $*$ denotes convolution on $\R^+$  and function $e_\su$ is defined by \[e_\su(x)=\e^{-(\su) x}, \qquad x\in \R^+.\] 
In particular, 
\begin{equation}\label{ab:5a} \|\eso\|\le 1 +\textstyle{\frac {4}{\alpha+\beta}}\max (\alpha,\beta)\le 5.\end{equation}
%\item [(b)] The restriction operator $R:\ce \to \x$ assigns to a  pair $(f_1,f_2)\in \ce$ the member $f$ of $\x$ given by 
%\[ f(x) = f_1(x), \text{ for } x<0  \mquad { and } f(x) = f_2(x), \text{ for } x >0. \]
%\end{itemize}
 \end{prop}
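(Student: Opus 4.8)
We sketch the argument; the statement itself is established in \cite{aberman,abtk}. Two facts are read off directly from \eqref{needed}--\eqref{ajednak}. Since $\wfl(x)=f(x)$ for $x<0$ and $\wfr(x)=f(x)$ for $x>0$, the restriction recovers $f$, i.e. $R\eso=\Id_\x$; moreover $e_\su*[f-f^\es](0)=0$, so both extensions are continuous at $0$ (with common value $f(0-)$) and, after a routine check of the limits at $\pm\infty$, lie in $\cer$ for every $f\in\x$. The bound \eqref{ab:5a} is then a direct estimate: from $\|e_\su*g\|\le(\su)^{-1}\|g\|$ with $g=f-f^\es$ (so $\|g\|\le 2\|f\|$) one gets $\|\wfl\|\le(1+4\alpha/(\su))\|f\|$ and $\|\wfr\|\le(1+4\beta/(\su))\|f\|$, and $\max(\alpha,\beta)\le\su$ gives the constant $5$.

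The convolution with $e_\su$ is exactly the device that encodes the transmission conditions. Differentiating \eqref{needed} by means of the rule $\frac{\rd}{\rd x}(e_a*g)=g-a\,e_a*g$, one finds that $\wfl$ has a continuous first derivative across $0$ if and only if $f'(0-)=\alpha(f(0+)-f(0-))$, and that, once this holds, the identity $\wfl''(0+)=\wfl''(0-)=f''(0-)$ follows automatically from the two relations \eqref{intro:2} taken together; the symmetric computation applies to \eqref{ajednak}. Hence $\eso$ carries $\dom{\aso}$ into $\dom{\mathcal A}$, where $\mathcal A(f_1,f_2)=(f_1'',f_2'')$ is the coordinatewise Laplacian generating $\ced$, and $R(\wfl'',\wfr'')=f''$, i.e. $R\mathcal A\eso=\aso$ on $\dom{\aso}$.

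To obtain \eqref{ab:6} I would reduce it to the single resolvent identity
\begin{equation*} R\,(\mu-\mathcal A)^{-1}\eso=(\mu-\aso)^{-1},\qquad \mu>0. \end{equation*}
Indeed, $\|\ced(t)\|\le 1$, so applying the Laplace transform to the candidate family and pulling the bounded operators through the integral gives $\int_0^\infty\e^{-\lam t}R\ced(t)\eso\dd t=R\,\lam(\lam^2-\mathcal A)^{-1}\eso=\lam(\lam^2-\aso)^{-1}$ (with $\mu=\lam^2$), which is precisely the Laplace transform of the cosine family generated by $\aso$; uniqueness of Laplace transforms of bounded strongly continuous families then yields \eqref{ab:6}. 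To prove the identity, fix $g\in\x$, write $(g_1,g_2)=\eso g$, and let $v=R\big((\mu-\mathcal A)^{-1}(g_1,g_2)\big)$, so that $v$ equals the free resolvent $(\mu-\Delta)^{-1}g_1$ on $(-\infty,0)$ and $(\mu-\Delta)^{-1}g_2$ on $(0,\infty)$. Because $g_1=g$ on $(-\infty,0)$ and $g_2=g$ on $(0,\infty)$, on each half-line $\mu v-v''=g$; thus $v$ solves the resolvent equation, and it remains only to show $v\in\dom{\aso}$, i.e. that $v$ satisfies \eqref{intro:2}.

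This last verification is the step I expect to be the main obstacle. Using the explicit kernel $(2\sqrt\mu)^{-1}\e^{-\sqrt\mu|x-y|}$ of the free resolvent, one evaluates $v(0\pm)$ and $v'(0\pm)$ and substitutes the convolution expressions \eqref{needed}--\eqref{ajednak}; the boundary terms generated by the exponential kernel are cancelled, after simplification, by the prefactors $2\alpha,2\beta$ and the damping rate $\su$ built into the extensions, and the transmission conditions $v'(0-)=\alpha(v(0+)-v(0-))$ and $v'(0+)=\beta(v(0+)-v(0-))$ emerge. Equivalently, the same compatibility computation shows that $\y=\mathrm{Range}\,\eso$ is invariant under $\ced$; granting that, \eqref{ab:6} also follows directly, since $R\eso=\Id_\x$ and $\eso R|_{\y}=\Id_{\y}$ turn the functional equation for $\ced$ into that for $R\ced(t)\eso$, whose generator was identified as $\aso$ above. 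Either way, it is precisely here that the exact form of the extension operator $\eso$ is essential.
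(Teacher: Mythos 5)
The paper offers no proof of this proposition at all: it is imported verbatim from \cite{aberman}*{Section 2} (and \cite{abtk}), so there is no internal argument to compare yours against. Judged on its own terms, your proposal has the right skeleton and several correctly executed pieces: the identity $R\eso=\Id_{\x}$; the continuity of $\wfl$ and $\wfr$ at $0$ (though note the common value is $f(0-)$ for $\wfl$ but $f(0+)$ for $\wfr$, not $f(0-)$ for both); the bound \eqref{ab:5a} via $\|e_{\su}*g\|\le (\alpha+\beta)^{-1}\|g\|$; and the computation showing that matching of the one-sided first derivatives of $\wfl$ at $0$ is equivalent to $f'(0-)=\alpha(f(0+)-f(0-))$ (likewise for $\wfr$), with continuity of the second derivative then following from the two conditions \eqref{intro:2} combined. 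The reduction of \eqref{ab:6} to the resolvent identity $R(\mu-\mathcal A)^{-1}\eso=(\mu-\aso)^{-1}$ together with uniqueness of Laplace transforms of bounded strongly continuous families is also a legitimate and standard route, since the proposition presupposes (via \cite{tombatty}) that $\aso$ does generate a cosine family.

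The genuine gap is that the one step carrying the actual content of the proposition --- verifying that $v=R(\mu-\mathcal A)^{-1}\eso g$ satisfies the transmission conditions \eqref{intro:2}, or equivalently that $\mathrm{Range}\,\eso$ is invariant under $\ced(t)$ --- is only narrated (\enquote{the boundary terms \dots are cancelled, after simplification}), not performed. Everything you do carry out concerns $\eso$ at the level of $t=0$; nothing checks compatibility of the extensions with the dynamics, which is precisely where the constants $2\alpha$, $2\beta$ and the decay rate $\su$ are tested and where the argument could fail if the extension were mis-specified. Without that computation the proof is a plan, not a proof. A secondary loose end: in the alternative \enquote{invariance} route you assert that the generator of $t\mapsto R\ced(t)\eso$ \enquote{was identified as $\aso$ above}, but what you established is only that $\eso$ maps $\dom{\aso}$ into $\dom{\mathcal A}$ with $R\mathcal A\eso=\aso$ there, i.e.\ that the generator of the candidate family \emph{extends} $\aso$ on $\dom{\aso}$; to conclude equality of the generators you still need a surjectivity argument for $\lambda-\aso$ (or an appeal to both operators being generators), and this should be stated explicitly.
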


 \begin{prop} \label{tfct:prop2} The cosine family $\{\cskew (t),t \in \R\}$ in $\cer$ generated by  $\askew$  is given by the abstract Kelvin formula: 
\begin{equation}\label{ab:6a} \cskew (t) =R \ced (t) \eskew, \qquad t \in \R, \end{equation}
in which (a) $R$ is the restriction operator of Section \ref{tro}, and (b) 
the extension operator $\eskew\colon\cer \to \ce $ maps an $ f \in \cer$ to the pair $(\wfl,\wfr)\in \ce$ of 
 extensions of its left and right parts defined by
 \begin{align}\label{skew:4.1}
 \widetilde {\fl}(x)&= \begin{cases*}f(x),& $x\leq  0$, \\ 
\frac{\be-\al}{\apb} f(-x)+\frac{2\al}{\apb} f(x), &  $x> 0,$  \end{cases*}
 \end{align}
and
\begin{align}\label{skew:4.2}
  \widetilde {\fr}(x)&= \begin{cases*} \frac{2\be}{\apb} f(x)+\frac{\al-\be}{\apb} f(-x) , & $x< 0$,\\
f(x),&  $x\geq  0$.  \end{cases*} \end{align} 
%\item [(b)] The restriction operator $R:\ce \to \x$ was described in the previous proposition. \tcm{Explain why its values are now in $\cer$.} 
%\end{itemize}
 \end{prop}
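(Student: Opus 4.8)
The plan is to establish \eqref{ab:6a} by the same mechanism that underlies the Kelvin formula \eqref{ab:6} of Proposition \ref{tfct:prop1}: I would exhibit $\eskew$ as an isomorphism of $\cer$ onto a \emph{closed} subspace $\yskew\subset\ce$ that is \emph{invariant} under the basic Cartesian cosine family $\{\ced(t)\}$, with the restriction operator $R$ serving as the inverse isomorphism. Granting this, $R\ced(t)\eskew=\eskew^{-1}\ced(t)\eskew$ is the conjugate by $\eskew$ of the strongly continuous, equibounded cosine family $\ced(t)|_{\yskew}$, hence is itself a strongly continuous cosine family on $\cer$; and since a cosine family is determined by its generator, it then remains only to check that this generator is $\askew$.

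First I would record the facts that cost nothing. Reading off \eqref{skew:4.1}--\eqref{skew:4.2}, one has $\wfl(x)=f(x)$ for $x<0$ and $\wfr(x)=f(x)$ for $x>0$, so $R\eskew=\Id_{\cer}$; in particular $\eskew$ is injective and $\eskew R$ is a bounded idempotent, i.e.\ a projection of $\ce$ onto $\yskew\coloneqq\eskew(\cer)$, which is therefore closed. Boundedness of $\eskew$ is immediate from the explicit coefficients $\tfrac{\be-\al}{\apb},\tfrac{2\al}{\apb},\dots$, and together with $\|\ced(t)\|\le1$ and $\|R\|\le1$ this also yields equiboundedness of $\{R\ced(t)\eskew\}$.

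The crux is the invariance of $\yskew$, and this is the one step that is not purely formal. Substituting $f=R(g_1,g_2)$ into \eqref{skew:4.1}--\eqref{skew:4.2} shows that $(g_1,g_2)\in\yskew$ exactly when
\begin{align*}
g_1(x)&=\tfrac{\be-\al}{\apb}\,g_1(-x)+\tfrac{2\al}{\apb}\,g_2(x),& x&>0,\\
g_2(x)&=\tfrac{2\be}{\apb}\,g_1(x)+\tfrac{\al-\be}{\apb}\,g_2(-x),& x&<0.
\end{align*}
The key observation is that, when $\al\neq\be$, these two half-line relations are equivalent to the \emph{single} relation $(g_1^\es,g_2^\es)=M(g_1,g_2)$ holding pointwise on all of $\R$, where $M=M_{\al,\be}$ is a constant $2\times2$ matrix with $M^2=\Id$; the involutivity $M^2=\Id$ is precisely what propagates the relation from $x>0$ to $x<0$. (For $\al=\be$ one has $\eskew f=(f,f)$, so $\yskew$ is the diagonal and the invariance is trivial.) Because the reflection commutes with each $C(t)$, namely $C(t)g(-x)=\pol[g^\es(x-t)+g^\es(x+t)]=C(t)g^\es(x)$, and $M$ is a constant matrix, applying $\ced(t)=(C(t),C(t))$ preserves the global relation $(g_1^\es,g_2^\es)=M(g_1,g_2)$. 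This gives $\ced(t)\yskew\subset\yskew$. I expect the passage from the half-line identities to this global relation to be the main obstacle, since without it the (non-local) operators $C(t)$ would not obviously respect the defining constraints.

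It then remains to identify the generator $A$ of $\{R\ced(t)\eskew\}$, which by the conjugation equals $Af=R\mathcal A\eskew f$ on $\{f:\eskew f\in\dom{\mathcal A}\}$, where $\mathcal A(g_1,g_2)=(g_1'',g_2'')$ generates $\{\ced(t)\}$ (the plain second derivative in each coordinate, with no condition at $0$). I would compute the matching conditions forcing $\wfl,\wfr$ to be twice continuously differentiable across $x=0$: differentiating \eqref{skew:4.1}--\eqref{skew:4.2} and equating one-sided limits at $0$ yields, after cancelling the factor $\apb$, exactly $\be f'(0-)=\al f'(0+)$ from the $C^1$-matching and $f''(0-)=f''(0+)$ from the $C^2$-matching, i.e.\ the skew conditions \eqref{intro:1} (and whenever one coefficient vanishes the remaining condition is supplied by the other coordinate). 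On this domain $\mathcal A\eskew f=(\wfl'',\wfr'')$ restricts under $R$ to $f''$, since $\wfl=f$ on $(-\infty,0)$ and $\wfr=f$ on $(0,\infty)$. Hence $A=\askew$, and by uniqueness $R\ced(t)\eskew=\cskew(t)$ for every $t$, which is \eqref{ab:6a}. As a cross-check one could instead verify the resolvent identity $(\lam^2-\askew)^{-1}=R\,(\lam^2-\mathcal A)^{-1}\eskew$ and invoke Laplace-transform uniqueness of cosine families, but the conjugation route above seems cleaner.
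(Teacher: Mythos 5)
Your proof is correct, but note that the paper does not actually prove Proposition \ref{tfct:prop2}: it is quoted from \cite{abtk}*{Section 6}, and the only justification offered here is the remark immediately following the statement, which records precisely the two facts your argument hinges on, namely invariance of $\eskew(\cer)$ under $\{\ced(t)\}$ and the equality $\wfl(0)=f(0)=\wfr(0)$ guaranteeing that $R\ced(t)\eskew f$ lands back in $\cer$. Your write-up fills in that outline along the natural route: the reduction of the two half-line identities to the single global relation $(g_1^\es,g_2^\es)=M(g_1,g_2)$ with $M^2=\Id$ is exactly the right mechanism for invariance (the entries of your $M$ are the coefficients $\frac{\apb}{\be-\al}$, $-\frac{2\al}{\be-\al}$, etc., which the paper itself uses in the remark on the minimizing property of $\pskew$), and the $C^1$- and $C^2$-matching at $x=0$ correctly recovers the skew conditions \eqref{intro:1}, so the generator is $\askew$ and uniqueness of cosine families finishes the job. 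One small repair is needed: $\eskew R$ is not a well-defined operator on $\ce$, since $R$ maps $\ce$ onto $\x$ and not into $\cer$; indeed, applying \eqref{skew:4.1}--\eqref{skew:4.2} to an $f\in\x\setminus\cer$ produces a pair discontinuous at $0$, hence outside $\ce$. Closedness of $\yskew$ should instead be read off from $R\eskew=\Id$ on $\cer$ (which makes $\eskew$ bounded below, so its range is closed), or from the observation that $\yskew$ is the kernel of the bounded operator $(g_1,g_2)\mapsto(g_1^\es,g_2^\es)-M(g_1,g_2)$.
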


\begin{rem} The cartesian product basic cosine family leaves the image of $\cer$ via $\eskew$ invariant. On the other hand, \eqref{skew:4.1} and \eqref{skew:4.2} make it clear that  $ \widetilde {f_\ell}(0)= f(0)= \widetilde {f_{\textnormal r}}(0)$. As a result $ R \ced (t) \eskew f$ is a member of $\cer$ for $f\in \cer$. 

\end{rem}

\begin{figure} [tb]\centering
\includegraphics[width=0.4\textwidth]{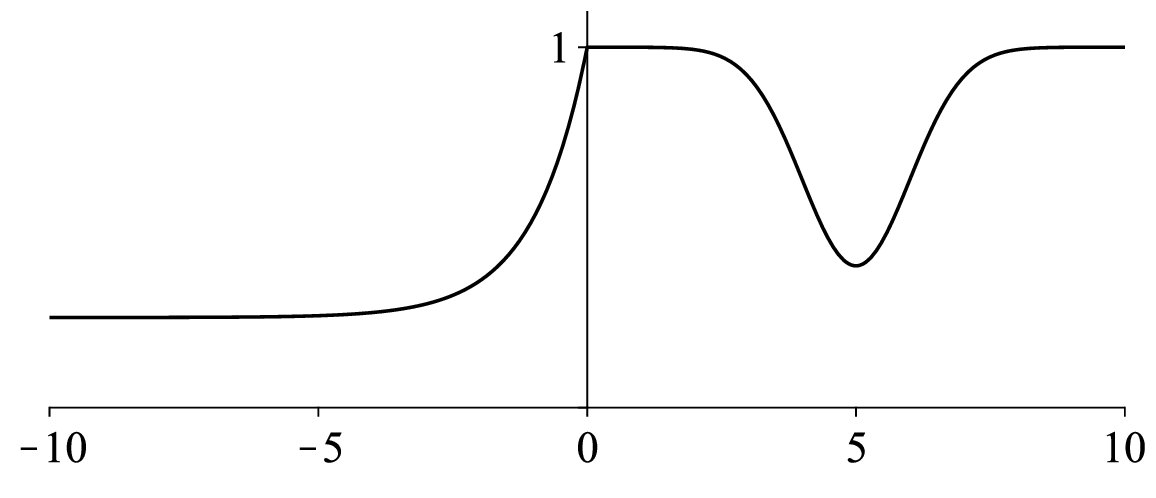}\\
\includegraphics[width=0.4\textwidth]{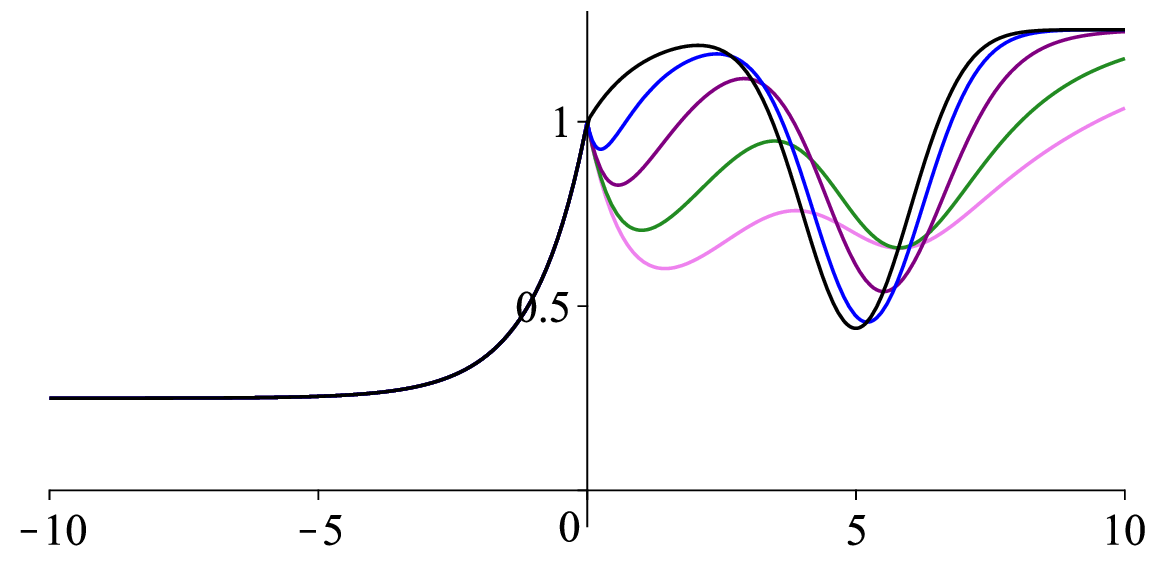}
\includegraphics[width=0.4\textwidth]{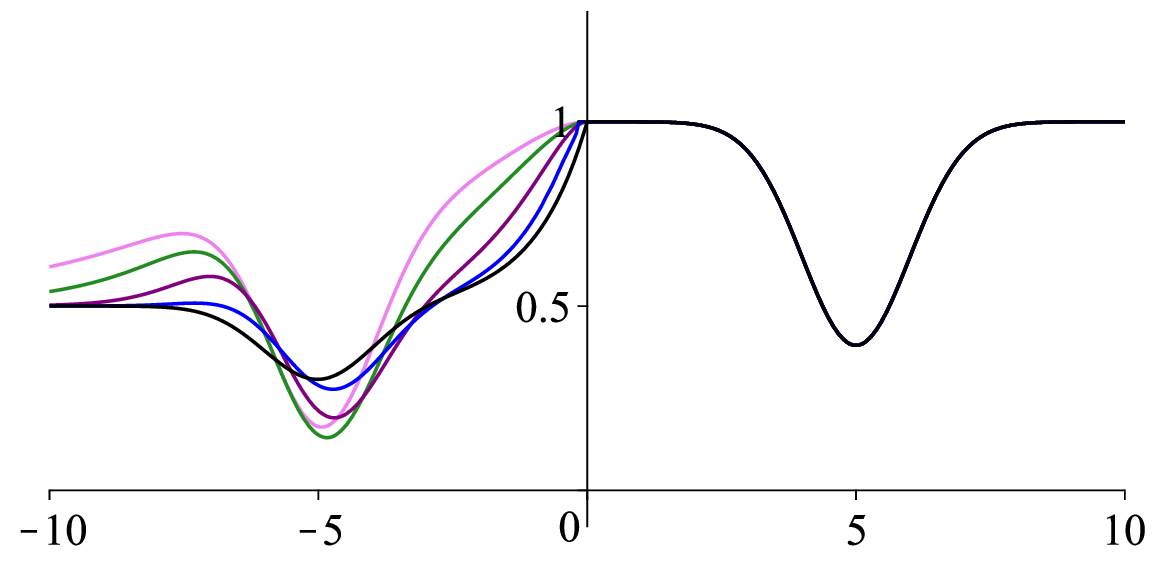}
\caption{Top: example of $f \in \cer$. Bottom:
$ E_{\alpha,\beta}^{\textnormal{skew}} f$ (black) and $\eson f $ for $n=1,2,5,15$ (pink, green, purple, blue, resp.) and $\alpha=0.2, \beta=0.1$. Graphs of $\fl$'s are on the left and of $\fr$'s on the right. }
\label{rys}
\end{figure}

\subsection{Proof of Theorem \ref{thm:skew}}
\hspace{0.6cm} \emph{Proof of (a)} In view of \eqref{ab:6}, estimate \eqref{skew:2} is a direct consequence of \eqref{ab:5a} because the operator norms of $R$ and $\ced(t)$ are equal to $1.$ 

\emph{Proof of (b)} By \eqref{ab:6} and \eqref{ab:6a}, to establish \eqref{skew:3}, it suffices to prove that  
\begin{equation}\label{skew:5} \gran \eson f = E_{\alpha,\beta}^{\textnormal{skew}} f, \qquad f \in \cer .\end{equation}
This, in turn, is an immediate consequence of the fact that distributions of exponential random variables with large parameters converge to Dirac measure at $0$, as expressed in Lemma \ref{lem:dirnew} presented in Appendix. For, Lemma \ref{lem:dirnew} (c)  implies that for $f\in \cer$, the second term in the second line  of the 
definition \eqref{needed} of $\widetilde {f_\ell} $ 
 with  $\alpha$ and $\beta$ replaced by $n\alpha$ and $n\beta$, respectively,  converges, as $n\to \infty$, to $\frac{\alpha}{\alpha+\beta} (f(x) - f(-x))$ uniformly in $x\ge 0.$ 
 This is because the function $\phi$ defined by $\phi (x) = f(x) - f(-x), x\ge 0$ belongs to $\cerp$, and we have $\phi (0)=0$. It follows that the first coordinate of $ \eson f $  converges (in the norm of $\cer$) to $\widetilde {f_{\ell}}$ defined by  \eqref{skew:4.1}---see Fig. \ref{rys}.
Since, similarly, its second coordinate converges to $ \widetilde {f_{\textnormal r}}$ from \eqref{skew:4.2}, the proof of \eqref{skew:5} is completed, and so is the proof of \eqref{skew:3}.

Before completing the proof of (b), we recall the general theorem shown in \cite{zwojtkiem} (see also \cite{knigaz}*{Chapter 61})  saying that outside of the regularity space (i.e., outside of the subspace defined in \eqref{skew:reg}---the regularity space of a sequence of cosine families is by definition the regularity space of the sequence of corresponding semigroups) cosine families cannot converge.
Since in the case of cosine families $\{\cosson (t),\, t \in \R\}, n \ge 1$ the regularity space equals $\cer$, 
outside of $\cer$ the limit $\gran \cosson (t)f $ does not exist  for at least one  $t\in \R \setminus \{0\}$. 

However, the abstract Kelvin formula \eqref{ab:6} provides additional insight into convergence under study. Namely, 
for $f$ belonging to $\x$, but not to $\cer$,  $ \gran \eson f$ still exits and is given by \eqref{skew:4.1} and \eqref{skew:4.2}, but the convergence is no longer uniform in $x$ and in the limit we obtain a pair of functions which are continuous everywhere except for $x=0$---see Fig. \ref{rys2}. Hence, if $t\neq0$,  the pointwise $\gran\cosson (t)f $ is not an element of $\x$, being discontinuous at $x=\pm t$. In particular, for $f\not \in \cer$, the strong limit exists for no $t\not =0$. 

\begin{figure} [tb]\centering
\includegraphics[width=0.4\textwidth]{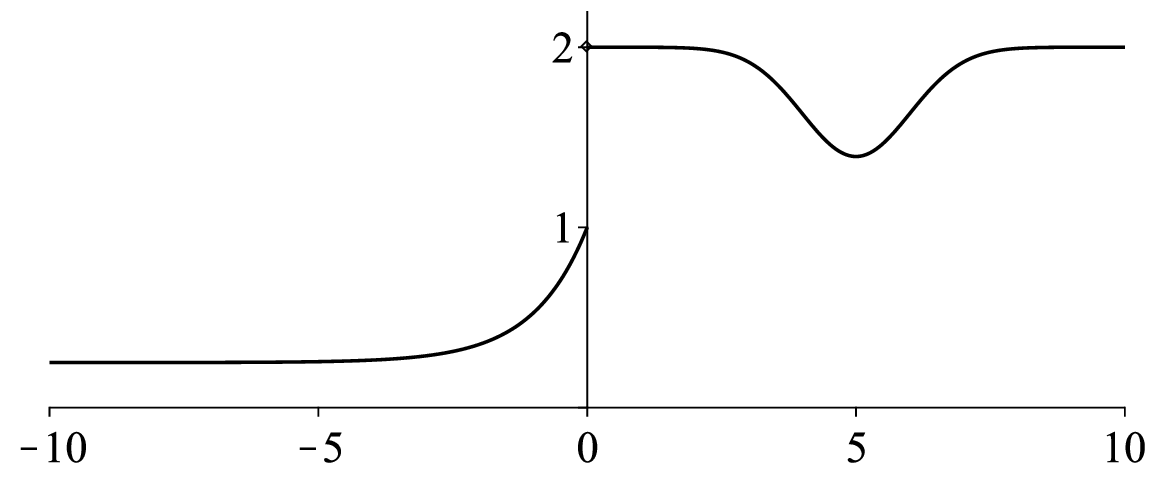}\\
\includegraphics[width=0.4\textwidth]{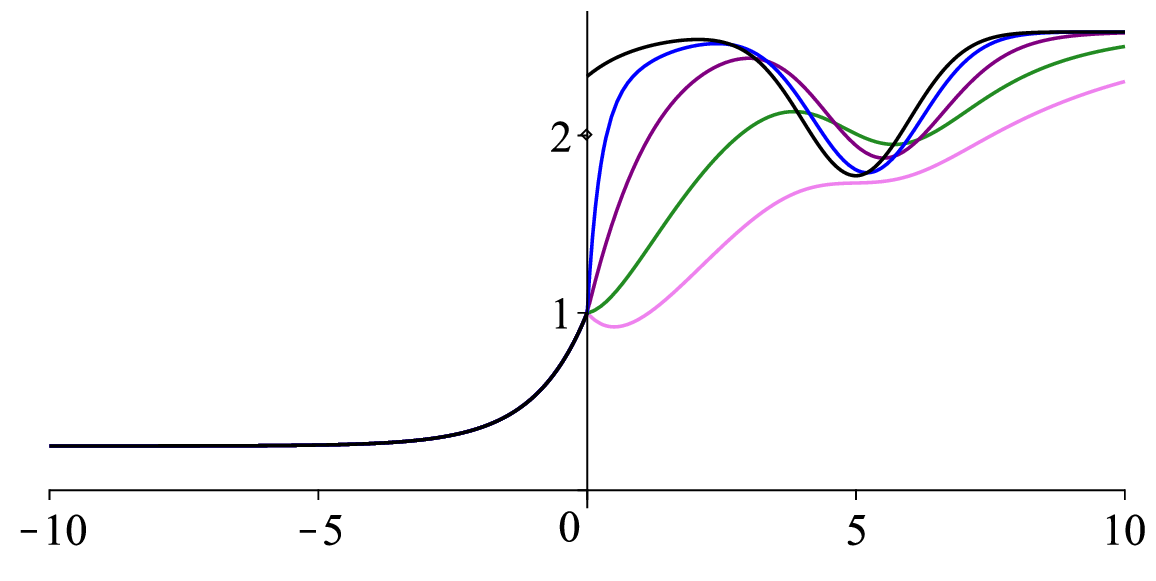}
\includegraphics[width=0.4\textwidth]{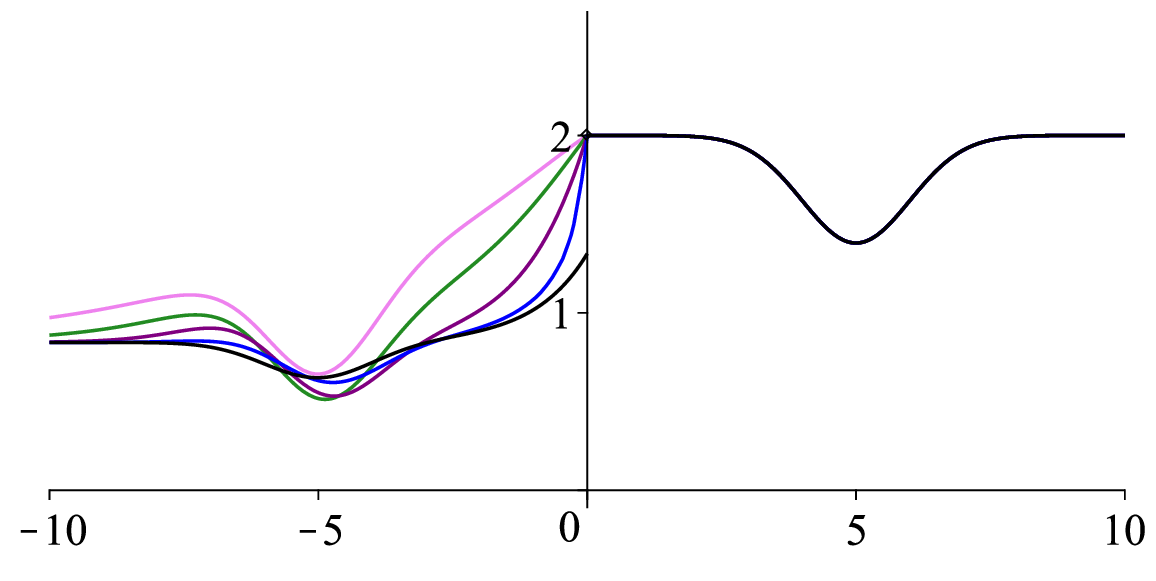}
\caption{Top: example of $f \in \x$. Bottom:
$\eson f $, $n=1,2,5,15$ (pink, green, purple, blue, resp.) and $\gran \eson f $ (black) for $\alpha=0.2, \beta=0.1$. Graphs of $\fl$'s are on the left and of $\fr$'s on the right.}
\label{rys2}
\end{figure}

\emph{Proof of (c)} 
We recall that, as an application of his algebraic version of the Hille--Yosida Theorem, J. Kisy\'nski has proved the following result accompanying Trotter--Kato--Neveu Theorem (see Corollary 5.8 in \cite{prof10}): if $B_n, n \ge 1$ are generators of equibounded semigroups, then the existence of the strong limit 
\( \gran \rez{B_n}  \)
is equivalent to the existence of 
\[ \gran \int_0^\infty \Psi (t) \e^{tB_n} \ud t \]
for any $\Psi$ that is absolutely integrable on $\R^+$ (see also \cite{hn}; the fact that the existence of the first limit implies the existence of the second for $\Psi = 1_{[0,\tau]}$ with $\tau>0$, has been noted already in the 1970 paper of T. G. Kurtz \cite{kurtz2}). 

An analogue of this result for cosine families was found in \cite{odessa}.  In our case it says that the existence of the strong limit $\gran \rez{\ason}$ (which is established in \cite{tombatty}) implies the existence of  the strong limit 
\begin{equation}\label{skew:6} \gran \int_{-\infty}^\infty \Psi (s) \cosson (s) \ud s \end{equation}
for any $\Psi$ that is absolutely integrable on the entire $\R$ and even (as long as we have an estimate of the type \eqref{skew:2}). 
Since, by the Weierstrass formula \eqref{cosf:1},
\[ \e^{t\ason} f =  {\textstyle \frac 1{2\sqrt{\pi t}}} \int_{-\infty}^\infty \e^{-\frac {s^2}{4t}} \cosson (s) f \ud s, \qquad f \in \x,  t >0\] 
the limit in \eqref{skew:4} is a particular case of \eqref{skew:6} for $\Psi(s)= \Psi_t (s) \coloneqq   {\textstyle \frac 1{2\sqrt{\pi t}}} \e^{-\frac {s^2}{4t}},$ $ s\in \R, t >0$. The fact that the limit is uniform with respect to $t$ in compact subintervals of $(0,\infty)$ is a consequence of the general result discussed in \cite{note} and \cite{knigaz}*{Thm 28.4}.

\begin{rem}\label{rem:rate} The rate of convergence in Theorem \ref{thm:skew} (b) depends apparently on the modulus of continuity of  $f\in \cer$. We can be more specific about this rate if we restrict ourselves to a dense subset of $\cer$: for $\phi \in C^1[-\infty,\infty]$, or, more generally, for $\phi$ that are Lipschitz continuous, the difference between $\cosson (t)f$  and  $\cskew (t) f$   is of the order of  $n^{-1}$. Indeed, if $|f(x)-f(y)|\le L |x-y|, x,y\in \R$ then $\phi\in \cerp$ defined by $\phi (x) \coloneqq f(x) - f(-x), x \ge 0$ is Lipschitz continuous on $\R^+$ with the same Lipschitz constant as $f$, and the the second line  in  \eqref{needed}  
 with  $\alpha$ and $\beta$ replaced by $n\alpha$ and $n\beta$, respectively, differs from the second line in \eqref{skew:4.1} by
\[ \frac {2\alpha}{\al + \be}[ n(\al +\be) e_{n(\al +\be)} * \phi (x) - \phi(x) ] \] 
and this in absolute value does not exceed $\frac {2\alpha L}{n(\al + \be)^2}$ by Remark \ref{rem:arate}. 
Since a similar reasoning shows that the absolute value of the difference between the first line in \eqref{ajednak} with $\al$ and $\be$ replaced by $n\al$ and $n\be$, respectively, and the first line in \eqref {skew:4.2} does not exceed  $\frac {2\beta L}{n(\al + \be)^2}$, we conclude that 
\[\sup_{t\in\R} \|  \cosson (t)f - \cskew (t) f \|\le \|  \eson f - E_{\alpha,\beta}^{\textnormal{skew}}f \| \le {\textstyle \frac Kn}, \]
where $K\coloneqq  \frac {2 \max (\alpha,\beta) L}{(\al + \be)^2}$. This, in turn, as in Remark \ref{rem:one}, implies 
\[  \sup_{t\geq 0}\big\|\e^{t\ason} f - \e^{t\askew} f\big\| \le {\textstyle \frac Kn}. \] 
 \end{rem}
\section{Asymptotic behavior} \label{sec:ab}

It is well-known  (see \cite{zwojtkiem}*{Corollary 1} which extends \cite{abhn}*{Prop. 3.14.6}) that, if  $\{C(t), t \in \R\}$ is a strongly continuous family in a Banach space $\mathsf F$, then the limit 
\( \grat C(t)f \)
exist for an $f\in \mathsf F$ only in the trivial case, that is, iff $C(t)f=f, t \in \R$. On the other hand, for rather non-trivial cosine families it often happens that
 the mean
 \[ M f \coloneqq  \grat t^{-1}\int_0^t C(s)f \ud s \]
exists for all $f \in \mathsf F$. For example, for the basic cosine family this limit exists in the bounded pointwise topology and we have 
\begin{equation}\label{ab:1}  Mf = \pol (f(-\infty) + f(\infty)), \qquad f \in \cer.\end{equation}
(Here, and in the rest of this section, for simplicity of notations, we do not distinguish between a real number, like 
$ \pol (f(-\infty) + f(\infty))$, and a function that is everywhere equal to this number.)
In this section we show that the mean operator $M$ for $\cosso$ remains the same when $\alpha$ and $\beta$ are replaced by $n\alpha$ and $n\beta$ respectively, and in fact coincides with the mean operator for $\cskew$. As a result, in view of Theorem \ref{thm:skew} (b), for $f\in \cer$ we can write
\begin{align*} \grat t^{-1}\int_0^t  \gran \cosson (s) f \ud s  &= \grat t^{-1}\int_0^t \cskew (s) f \ud s \\ &= \gran \grat t^{-1}\int_0^t \cosson  (s) f \ud s . \end{align*}

\begin{prop}\label{prop:srednie} We have 
\begin{align*} \grat t^{-1}\int_0^t \cosso (s) f \ud s &= \mab f, \qquad f \in \x,\\
\grat t^{-1}\int_0^t \cskew (s) f \ud s &=  \mab f, \quad f \in \cer, \end{align*}
where
\[ \mab f \coloneqq
{\textstyle \frac \beta{\alpha + \beta}} f(-\infty) + {\textstyle \frac \alpha{\alpha + \beta}} f(\infty),  \qquad   f \in \x. \] \end{prop}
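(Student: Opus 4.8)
The plan is to reduce both time averages, via the abstract Kelvin formulas of Propositions~\ref{tfct:prop1} and~\ref{tfct:prop2}, to the Ces\`aro mean of the basic Cartesian product cosine family $\{\ced(t),\,t\in\R\}$, whose coordinatewise mean is given explicitly by \eqref{ab:1}. Since the restriction operator $R$ is linear and contractive, I would first pull $R$ outside the time integral to write, for $f\in\x$,
\[ t^{-1}\int_0^t \cosso(s)f\ud s = R\left(t^{-1}\int_0^t \ced(s)\eso f\ud s\right), \]
and likewise for $\cskew$ with $\eskew$ in place of $\eso$ (valid on $\cer$). Because $\ced$ acts coordinatewise, the inner average splits into two copies of $t^{-1}\int_0^t C(s)\ud s$ applied to the two components of $\eso f=(\wfl,\wfr)$, so by \eqref{ab:1} it converges, in the bounded pointwise topology, to the pair $\bigl(M\wfl,M\wfr\bigr)$ with $Mg=\tfrac12\bigl(g(-\infty)+g(\infty)\bigr)$. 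Thus everything comes down to evaluating the limits at $\pm\infty$ of the extended functions.

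For the skew extensions \eqref{skew:4.1} and \eqref{skew:4.2} these boundary values are immediate, since $\wfl$ and $\wfr$ are affine combinations of $f(x)$ and $f(-x)=f^\es(x)$; for instance $\wfl(-\infty)=f(-\infty)$ and $\wfl(\infty)=\frac{\beta-\alpha}{\alpha+\beta}f(-\infty)+\frac{2\alpha}{\alpha+\beta}f(\infty)$. Averaging yields $M\wfl=\frac{\beta}{\alpha+\beta}f(-\infty)+\frac{\alpha}{\alpha+\beta}f(\infty)=\mab f$, and the same computation on \eqref{skew:4.2} gives $M\wfr=\mab f$. Both limit coordinates being the constant $\mab f$, applying $R$ returns $\mab f$, which is the second asserted identity.

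For the snapping-out extensions \eqref{needed} and \eqref{ajednak} the only genuine step is an Abelian-type limit for the exponential convolution, namely
\[ \lim_{x\to\infty}(\alpha+\beta)\,e_{\alpha+\beta}*g\,(x)=g(\infty),\qquad g\in\cerp, \]
which I would obtain by the substitution $u=x-y$ and dominated convergence, the weight $(\alpha+\beta)e^{-(\alpha+\beta)u}$ being a probability density on $\R^+$ that concentrates the mass of the average near the right endpoint of integration. Applied with $g=f-f^\es$, for which $g(\infty)=f(\infty)-f(-\infty)$, this gives $\wfl(\infty)=f(-\infty)+\frac{2\alpha}{\alpha+\beta}\bigl(f(\infty)-f(-\infty)\bigr)$ and, symmetrically, $\wfr(-\infty)=f(\infty)-\frac{2\beta}{\alpha+\beta}\bigl(f(\infty)-f(-\infty)\bigr)$, while $\wfl(-\infty)=f(-\infty)$ and $\wfr(\infty)=f(\infty)$ are read off directly. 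The same arithmetic as in the skew case then produces $M\wfl=M\wfr=\mab f$, and hence the first identity.

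The main obstacle is not the arithmetic but the topological bookkeeping. Since \eqref{ab:1} holds only in the bounded pointwise topology, I must check that this mode of convergence survives the assembly by $R$: the operators $t^{-1}\int_0^t\ced(s)\ud s$ are uniformly bounded (their norm is at most $1$, as is $\|R\|$), and $R$ carries a uniformly bounded, pointwise convergent net to a uniformly bounded, pointwise convergent net, so the limit may be computed coordinatewise and then restricted. The fact that in both cases the two limit coordinates are the \emph{same} constant $\mab f$ makes the restriction unambiguous and closes both claims simultaneously.
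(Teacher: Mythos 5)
Your proposal is correct and follows essentially the same route as the paper: both reduce the Ces\`aro mean via the abstract Kelvin formula to coordinatewise means of the basic cosine family, evaluate the limits of $\wfl,\wfr$ at $\pm\infty$ (using the same Abelian limit $\lim_{x\to\infty} e_{\alpha+\beta}*[f-f^{\es}](x)=\frac{1}{\alpha+\beta}\bigl(f(\infty)-f(-\infty)\bigr)$ for the snapping-out extensions), and then apply $R$. Your extra remarks on the bounded pointwise topology surviving the application of $R$ are a welcome tightening of bookkeeping that the paper leaves implicit, but they do not change the argument.
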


\begin{proof}Because of \eqref{ab:1}, for $\widetilde{\fl}$ of \eqref{skew:4.1}, we have 
\[   \grat t^{-1}\int_0^t C(s) \widetilde{\fl} \ud s =\pol (f(-\infty) + {\textstyle \frac{\beta-\alpha}{\alpha+\beta}} f(-\infty) + \textstyle{\frac{2\alpha}{\alpha+\beta}} f(\infty)) = \mab f.\]
and a similar calculation based on \eqref{skew:4.2} shows that $\grat t^{-1}\int_0^t C(s)\widetilde{\fr} \ud s$ $=
\mab f$ too. Hence, $\grat t^{-1}\int_0^t \ced (s) \eskew f \ud s = (\mab f,\mab f)\in \ce $,
and thus 
\[  \grat t^{-1}\int_0^t \cskew (s) f \ud s = R  (\mab f, \mab f) = \mab f,\]
showing the second relation to be proved. 

The proof of the first of these relations is rather similar. For $\widetilde{\fl}$ of \eqref{needed} 
\[   \grat t^{-1}\int_0^t C(s) \widetilde{\fl} \ud s = f(-\infty) + {\textstyle \frac \alpha{\alpha+\beta}} (f(\infty)-f(-\infty)) = \mab f ,\]
because 
\( \lim_{x\to \infty} e_{\alpha+\beta}*[f - f^\es](x) = {\textstyle \frac 1{\alpha+\beta}} (f(\infty) - f(-\infty))\).
 Since the calculation for $\widetilde{\fr}$ gives the same result, we argue as above to complete the proof. 
 \end{proof}

\section{Convergence of projections}\label{sec:cop}

The abstract Kelvin formula \eqref{ab:6} %and \eqref{ab:6a} 
works well because the extension operator
$\eso$ %and $\eskew$ 
is shaped by transmission conditions \eqref{intro:2} in a very specific manner. In particular, the space \[\y\subset \ce \] of extensions of all elements of $\x$ is invariant under the basic cartesian product cosine family $\{\ced(t),t\in \R\}$, 
%and so is the space 
%\[ \yskew \subset \ce \]
%of extensions of elements of $\cer$ shaped by transmission conditions \eqref{intro:1}. 

It is one of the main findings of \cite{aberman} that $\y$ %and $\yskew$ are 
is \emph{complemented}. In fact, Section 3 in \cite{aberman} provides an explicit form of a natural projection of $\ce$ on this subspace. A  `nearly orthogonal' projection, say,  $\p$ on $\y$ is given by 
\[ \p (f_1,f_2) = (g_1,g_2) \]
where 
\begin{align}\label{intro:eqdef_g2}\begin{split}
g_1(x) & = \fle^e(x) +\pol(\gamma+2\al) \int_{-\infty}^x \big[\textstyle{\frac 1\gamma} k_1(y)-\pol  k_2(y) \big]\e^ {-\sq  (x-y)}   \ud y \\
 &\phantom{=} -\pol(\gamma-2\al)  \int_x^{\infty} \big[\textstyle{\frac 1\gamma} k_1(y)+\pol  k_2(y) \big]\e^ {\sq (x-y)}   \ud y ,\\
g_{2}(x) &= \fre^e(x) +\pol (\gamma+2\be) \int_x^\infty  \big[\textstyle{\frac 1 \gamma} k_1(y)+\pol  k_2(y) \big]\e^{\sq (x-y)}   \ud y \\   
 &\phantom{=} -\pol (\gamma-2\be ) \int_{-\infty}^x \big[\textstyle{\frac 1 \gamma} k_1(y)-\pol  k_2(y) \big]\e^{-\sq (x-y)}   \ud y , \qquad x\in \R,   \end{split}
 \end{align}
and 
\begin{align*}
 k_1 \coloneqq   \alpha f_1^o + \beta f_2^o, \qquad
 k_2\coloneqq f_1^e - f_2^e .  \end{align*}

We will show that convergence of solution families described in the previous section is accompanied by convergence of the related projections. We will show namely that projections $\pn$  converge strongly to a projection on the space of extensions related to the skew Brownian motion. %Here are the details.  

\begin{thm}\label{conv_proj1} \ \ 
\begin{itemize}
\item [(a)]
For every $(f_1,f_2)\in \ce$, we have
\begin{align*}
\lim_{n\to \infty} \pn(f_1,f_2)=\pskew(f_1,f_2),
\end{align*}
where \[\pskew(f_1,f_2)\coloneqq \big(f_1^e+\textstyle{\frac{2\al}{\gamma^2}} k_1-\pol k_2, f_2^e+\textstyle{\frac{2\be}{\gamma^2}} k_1+\pol k_2\big).\]
\item [(b)]
The map $\pskew$ is a projection on the space 
\[ \yy \coloneqq \{ (g_1,g_2) \in \ce\colon g_1^e=g_2^e, \be g_1^o= \al g_2^o \}.\] 
\item [(c)] The space $\yy$ is precisely the subspace of extensions related to the skew Brownian motion  generated by  $\askew$.
\end{itemize}
\end{thm}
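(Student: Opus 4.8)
The plan is to prove (a) by a Dirac-type concentration and (b)--(c) by elementary algebra in the even/odd decomposition, using throughout that $k_1=\al f_1^o+\be f_2^o$ is odd, $k_2=f_1^e-f_2^e$ is even, and $\sq^2=2(\al^2+\be^2)$. For (a), I would first record that replacing $\al,\be$ by $n\al,n\be$ in \eqref{intro:eqdef_g2} turns $\sq$ into $n\sq$, multiplies $k_1$ by $n$, and leaves $k_2$ unchanged, so that $\tfrac1\sq k_1$ is unchanged while each prefactor $\pol(\sq\pm2\al)$ becomes $\tfrac n2(\sq\pm2\al)$. I would then rewrite each of the four integrals defining $\pn(f_1,f_2)$ in the form $n\sq\int_{-\infty}^x h(y)\,\e^{-n\sq(x-y)}\ud y$ or $n\sq\int_x^\infty h(y)\,\e^{-n\sq(y-x)}\ud y$, where $h$ is one of $\tfrac1\sq k_1\pm\pol k_2$, both of which lie in $\cer$ because $f_1,f_2$ do. These are one-sided exponential averages whose kernels concentrate at the origin as the parameter $n\sq\to\infty$, so by Lemma \ref{lem:dirnew} each converges to $h(x)$ uniformly on $[-\infty,\infty]$ (it matters here that $k_1,k_2$ have finite limits at $\pm\infty$). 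Collecting the surviving constants $\tfrac{\sq\pm2\al}{2\sq}$ in the first coordinate and simplifying with $\sq^2=2(\al^2+\be^2)$ returns $f_1^e+\tfrac{2\al}{\sq^2}k_1-\pol k_2$; the second coordinate is symmetric, which is (a).

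For (b), reading off the even and odd parts of $\pskew(f_1,f_2)=(g_1,g_2)$ gives $g_1^e=f_1^e-\pol k_2=\pol(f_1^e+f_2^e)=g_2^e$ together with $g_1^o=\tfrac{2\al}{\sq^2}k_1$ and $g_2^o=\tfrac{2\be}{\sq^2}k_1$, whence $\be g_1^o=\al g_2^o$; thus $\pskew(\ce)\subseteq\yy$. Conversely, if $(g_1,g_2)\in\yy$ then $k_2=g_1^e-g_2^e=0$, and combining $k_1=\al g_1^o+\be g_2^o$ with $\be g_1^o=\al g_2^o$ and $\sq^2=2(\al^2+\be^2)$ yields $\tfrac{2\al}{\sq^2}k_1=g_1^o$ and $\tfrac{2\be}{\sq^2}k_1=g_2^o$ (the degenerate cases $\al=0$, forcing $g_1^o=0$, and $\be=0$, forcing $g_2^o=0$, being immediate), so that $\pskew(g_1,g_2)=(g_1,g_2)$. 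Hence $\pskew$ fixes $\yy$ pointwise and has range $\yy$, so it is a projection onto $\yy$.

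For (c), I read the phrase \emph{subspace of extensions related to the skew Brownian motion} as the range of the extension operator $\eskew$ of Proposition \ref{tfct:prop2}, and prove $\yy=\eskew(\cer)$ by two inclusions. Decomposing \eqref{skew:4.1}--\eqref{skew:4.2} into even and odd parts shows, for $f\in\cer$, that $(\wfl)^e=(\wfr)^e$ while $(\wfl)^o=\tfrac{2\al}{\apb}f^o$ and $(\wfr)^o=\tfrac{2\be}{\apb}f^o$, so that $\eskew f\in\yy$. For the reverse inclusion, given $(g_1,g_2)\in\yy$ I would put $f\coloneqq R(g_1,g_2)$; since $g_1^e=g_2^e$ forces $g_1(0)=g_2(0)$, the function $f$ is continuous at $0$ and hence lies in $\cer\subset\x$. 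Substituting $f(-x)=g_1(-x)$, $f(x)=g_2(x)$ into \eqref{skew:4.1} for $x>0$ and writing $g_i=g_i^e+g_i^o$ with $g_2^e=g_1^e$, the claim $\wfl=g_1$ reduces precisely to $\al g_2^o=\be g_1^o$, and symmetrically $\wfr=g_2$ reduces to the same identity, both of which hold on $\yy$. Thus $\eskew R=\Id$ on $\yy$; since also $R\eskew=\Id$ on $\cer$, the operator $\eskew$ is a bijection of $\cer$ onto $\yy$, giving $\eskew(\cer)=\yy$ and hence (c).

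The only genuinely analytic ingredient is the uniform concentration in (a); everything else is linear algebra in the even/odd coordinates. I expect the main obstacle to be the $n$-bookkeeping in (a)---isolating exactly the factor $n\sq$ demanded by the Dirac kernel and then checking that the leftover prefactors recombine into $\tfrac{2\al}{\sq^2}$, $\tfrac{2\be}{\sq^2}$ and $\mp\pol$ as claimed---together with the verification in (c) that $R(g_1,g_2)$ really lands in $\cer$. The degenerate parameter values $\al=0$ and $\be=0$ merit an explicit remark but cause no real difficulty.
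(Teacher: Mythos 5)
Your proposal is correct and takes essentially the same approach as the paper: part (a) is the identical application of Lemma \ref{lem:dirnew} after the same $n$-bookkeeping (the prefactors become $\tfrac n2(\gamma\pm2\alpha)$ while $\tfrac1\gamma k_1$ and $k_2$ are unchanged), and part (b) is the same even/odd computation, the only cosmetic difference being that the paper obtains idempotency of $\pskew$ from its being a strong limit of projections while you get it from range plus fixed points. In (c) you package the argument as $\eskew R=\Id$ on $\yy$ rather than reducing the functional equations \eqref{dodane:1} to \eqref{dodane:2} as the paper does, but the underlying algebra is the same.
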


\begin{proof}
\textbf{ (a)} From the first formula in \eqref{intro:eqdef_g2}
 it follows that the first coordinate of $\pn(f_1,f_2)$ is
\begin{align*}
g_{1,n}(x) & = \fle^e(x) +\pol(\gamma+2\al)n \int_{-\infty}^x \big[\textstyle{\frac 1\gamma} k_1(y)-\pol  k_2(y) \big]\e^ {-n\sq  (x-y)}   \ud y \nonumber \\
 &\phantom{=} -\pol(\gamma-2\al) n \int_x^{\infty} \big[\textstyle{\frac 1\gamma} k_1(y)+\pol  k_2(y) \big]\e^ {n\sq (x-y)}   \ud y , \qquad x\in \R.  \end{align*}
Lemma \ref{lem:dirnew} in Appendix implies now that $g_{1,n}$ converges in the norm of $\cer$, as $n\to \infty$, to 
$%f_1^e+\pol(\gamma+2\al) \big(\textstyle{\frac 1\gamma}h_1-\pol h_2 \big)-\pol(\gamma-2\al) \big(\textstyle{\frac 1\gamma}h_1+\pol h_2 \big)=
f_1^e+ \frac{2\al}{\gamma^2} k_1-\frac{1}{2} k_2$, as desired.

For the proof of convergence of the second coordinate of $\pn(f_1,f_2)$, denoted  $g_{2,n}$,  we proceed analogously. Namely, by \eqref{intro:eqdef_g2}, we have
\begin{align*}
g_{2,n}(x) &= \fre^e(x) +\pol (\gamma+2\be)n \int_x^\infty  \big[\textstyle{\frac 1 \gamma} k_1(y)+\pol  k_2(y) \big]\e^{n \sq (x-y)}   \ud y \\
 &\phantom{=} -\pol (\gamma-2\be )n \int_{-\infty}^x \big[\textstyle{\frac 1 \gamma} k_1(y)-\pol  k_2(y) \big]\e^{-n\sq (x-y)}   \ud y , \qquad x\in \R.   
 \end{align*}
Hence, by Lemma \ref{lem:dirnew}, $g_{2,n}$ converges in the norm of $\cer$ to 
$f_2^e+\frac{2\be}{\gamma^2}k_1+\frac{1}{2}k_2$.

\textbf{ (b)} {As a strong limit of projections, $\pskew$ is a projection also. To characterize its range we} consider $(f_1,f_2) \in \ce$ and $(g_1,g_2)= \pskew(f_1,f_2) $.
Since $f_1^e,f_2^e,$ $ k_1$ and $ k_2$ all belong to $\cer$, it follows that so do $g_1$ and $g_2 $. Furthermore,
$g_1^e=f_1^e-\pol  k_2=\pol(f_1^e+f_2^e)=f_2^e+\pol  k_2=g_2^e$, because $k_1$ is odd and $k_2$ is even. 
Similarly, we see that $g_1^o=\frac{2\al}{\gamma^2}  k_1$ and $g_2^o=\frac{2\be}{\gamma^2}  k_1$. 
This shows  that $(\gl,\gr) \in \yy$, that is, that the range of $\pskew$ is contained in $\yy$. 

To prove the other inclusion, we assume that  $\be f_1^o= \al f_2^o$ and $f_1^e = f_2^e.$
Then, $2\alpha  k_1 = \gamma^2 f_1^o$, $2\beta  k_1 = \gamma^2 f_2^o$ and $ k_2=0$. Hence the definition of $\gl$ and $\gr$  simplifies to $\gl =f_1^e+f_1^o=f_1$ and $\gr = f_2^e+f_2^o=f_2$. This means that for $(f_1,f_2) \in \yy$, $\pskew (f_1,f_2)=(f_1,f_2)$. Therefore, the range of $\pskew$ contains $\yy$ and the proof of (b) is complete. 

\textbf{ (c)} Relations \eqref{skew:4.1} and \eqref{skew:4.2} show that  the subspace  of extensions related to the skew Brownian motion can be characterized as the space of  $(\gl,\gr)\in\ce$ satisfying $\gl(x)= \frac{\be-\al}{\apb} \gl(-x)+\frac{2\al}{\apb} \gr(x)$ and $ \gr(-x)=\frac{2\be}{\apb} \gl(-x)+\frac{\al-\be}{\apb} \gr(x) $ for $x\geq 0$.
Multiplying by $\alpha + \beta$ and rearranging (using \eqref{elek}) we obtain the following equivalent form of these equations:
\begin{align}
\be g_1^o&= \al( g_2-g_1^e),\qquad \al g_2^o= \be( g_2^e-g_1^\es) \quad  \textrm{on } \R^+. \label{dodane:1}
\end{align}
These, in turn, are equivalent to 
\begin{align}
g_1^e&= g_2^e,\qquad \be g_1^o= \al g_2^o \quad  \textrm{on } \R^+. \label{dodane:2}
\end{align}
Indeed, summing sides of \eqref{dodane:1} and rearranging once again yields the first relation in \eqref{dodane:2}, and then the first relation in \eqref{dodane:1} renders the second in \eqref{dodane:2}. Conversely, a straightforward argument leads from \eqref{dodane:2} to \eqref{dodane:1}. Finally, equations featured in \eqref{dodane:2} hold on $\R^+$ whenever they hold on $\R$.  
%Since for any $g\in \cer$ we have $g^o+g^e=g$ and $g^e-g^o=g^\es $, summing the above equations yields  $g_1^e=g_2^e$ on $\R^+$ and then from the first of them we obtain $  \be g_1^o= \al g_2^o$ on $\R^+$.
%By \eqref{intro:T}, these relations must in fact hold on $\R$. 
%Since the above reasoning can be reversed, it follows that
Hence, the space of extensions equals $\yy$. \end{proof}

\begin{rem}
The operator $\pskew$ mimics orthogonal projection in the Hilbert space of square integrable functions on $\R$. More precisely, for $(f_1,f_2)\in \ce$, the pair $\pskew(f_1,f_2)$ minimizes the value of
\[
L(g_1,g_2)\coloneqq \int_{-y}^y (g_1(x)-f_1(x))^2+(g_2(x)-f_2(x))^2 \ud x
\]
over all $(g_1,g_2)\in \yy$ for any positive $y$.
To prove this claim, we assume for a moment that $\alpha \neq \beta$. Summing the sides of the first equation in \eqref{dodane:2} multiplied by $\alpha$ with the sides of the second, we check that  $\gl(-x)= \frac{\apb}{\be-\al} \gl(x)-\frac{2\al}{\be-\al} \gr(x)$ for $(g_1,g_2) \in \yy$ and $x\ge 0$. Also, repeating these calculations with $\alpha$ replaced by $\beta$ yields $ \gr(-x)=\frac{2\be}{\be-\al} \gl(x)-\frac{\apb}{\be-\al} \gr(x) $ for $x\geq 0$.  It follows that
 \begin{align*}
L(g_1,g_2)=& \int_{0}^y (g_1(x)-f_1(x))^2+\Big({\textstyle \frac{\apb}{\be-\al}} \gl(x)-{\textstyle \frac{2\al}{\be-\al}} \gr(x) -f_1(-x) \Big)^2
\\&+(g_2(x)-f_2(x))^2 +\Big({\textstyle \frac{2\be}{\be-\al}} \gl(x)-{\textstyle \frac{\apb}{\be-\al}} \gr(x) -f_2(-x)   \Big)^2\ud x.
\end{align*}
We can minimize the integrand  for each $x \in (0,y)$ by finding the minimum of a quadratic function in two variables ($f_1(\pm x)$ and $f_2(\pm x)$ are treated as fixed parameters). Direct calculations verify that such solution coincides with $\pskew(f_1,f_2)$ on the interval $(-y,y)$.

The case $\al=\be$ is straightforward because it comes down to the fact that $(g_1,g_2)=(\pol(f_1+f_2),\pol(f_1+f_2))=P^{\textnormal{skew}}_{\al,\al}(f_1,f_2)$ miminizes
 $L(g_1,g_2)=\int_{-y}^y (g_1(x)-f_1(x))^2+(g_1(x)-f_2(x))^2 \ud x$ over all $(g_1,g_2)\in \ce$ satisfying $g_1=g_2$.
\end{rem}

\begin{rem} Arguing as in Remark \ref{rem:rate}, we can estimate the rate of convergence in Theorem \ref{conv_proj1} (a); we omit the details. \end{rem}

\section{Convergence of `complementary' families}\label{sec:cocf}

\subsection{Definition of `complementary' cosine families}\label{sec:docc}
The fact that $\p$ of Section \ref{sec:cop} is a projection on $\y\subset \ce$ implies the existence of the subspace $\z$ that complements $\y$ to the entire $\ce$:
\[ \ce = \y \oplus \z ;\] 
$\z$ is defined as the kernel of $\p$ or, equivalently, as the image of $\ce$ via \begin{equation} \q\coloneqq I_\ce - \p.\label{defq} \end{equation}
Interestingly, $ \z$ turns out to be invariant under the cartesian product basic cosine family also, and in fact is unequivocally shaped by certain boundary conditions. 

Elaborating on this succinct statement, we first define the space \[ \xo \] as the subspace of functions $f\in \x$ satisfying $f(0+)=-f(0-)$ (`$\textnormal{ov}$' stands for `opposite values'). As it transpires, each member $f$ of $\xo$ can be identified with the pair \[ \esop f = (\wfl,\wfr)\in \z\]
determined by
\begin{align*}%\label{proj:1c}
 \wfl(x)&= \begin{cases*}f(x),& $x<  0$, \\ 
% f(0-),& $x=  0$,\\
 -f(-x)-2f(0+)e_\su(x)-2 e_\su * [\be f-\al f^\es](x), & $x> 0,$  \end{cases*}
 \end{align*}
and
\begin{align*}%\label{proj:1d}
  \wfr (x)&= \begin{cases*}-f(-x)+2f(0+)e_\su(-x)+ 2 e_\su * [\be f-\al f^\es](-x) , & $x< 0$,\\
 %f(0+),& $x=  0$,\\
f(x),&  $x>0$.  \end{cases*} \end{align*}  
As a result, the formula 
\begin{equation} \label{gener:B} \cossot (t)  \coloneqq   R C_D(t) \esop, \qquad t \in \R,\end{equation}
defines a cosine family in $\xo$. Since \(\|\esop\|\le 5$, we have 
\[ \|\cossot (t)\| \le 5, \qquad \alpha, \beta \ge 0, t \in \R. \]
It is also proved in \cite{aberman} that the generator, say, $A^{\textnormal{o-s}}_{\al,\be}$, of this cosine family can be characterized as follows. 
Its domain $D(A^{\textnormal{o-s}}_{\al,\be})$ consists of functions $f \in \xo$ satisfying the following
conditions: 
\begin{itemize}
\item[(a)] $f$ is twice continuously differentiable in both
  $(-\infty,0-]$ and $[0+,\infty)$, separately, with left-hand and
  right-hand derivatives at $x=0\mp$, respectively, 
\item[(b)] both the limits $\lim_{x\to \infty} f'' (x)$ and
  $\lim_{x\to -\infty} f''(x) $ exist and are finite, and
 \item[(c)]\mbox{}\vspace{-1.5\baselineskip} \begin{equation}\label{komplementarne} f''(0+)=\alpha f'(0-) + \beta f'(0+) \mquad{ and } f''(0+)=-f''(0-).\end{equation}    \end{itemize} 
 Moreover, for such $f$ we have
 \[ A^{\textnormal{o-s}}_{\al,\be} f = f'';\]
we note in passing that the second condition in \eqref{komplementarne} is necessary for  $f''$ to belong to $ \xo$. 

It is worth stressing two points here. First of all, because of the way $\cossot$ was defined, it can be seen as a `complementary' cosine family to $\cosso$. Moreover, it can be shown that if \eqref{gener:B} is to define the cosine family generated by $A^{\textnormal{o-s}}_{\al,\be}$, the extension operator $\esop$ has to be defined as above. In this sense, $\esop$, and thus $\z$ also, is shaped by the boundary conditions \eqref{komplementarne}. 

\subsection{Convergence of `complementary' solution families}

We know from Theorem  \ref{thm:skew} that cosine families $\cosson $ converge to the cosine families $\cskew$ as $n\to \infty$, and that implies convergence of the related semigroups. In this section, we want to complement the aforementioned theorem with the result concerning convergence of  $\{\cossot,\, t\in \R\}$ introduced in Section \ref{sec:docc}. We will show that (see Theorem \ref{thm:weks}), in contrast to $\cosson$ which converge only on a subspace of $\x$, $\cossotn$ converge, as $n\to \infty$, on the entire $\xo$. Moreover, perhaps more surprisingly,  the limit cosine family is a mirror image of that related to skew Brownian motion (see Proposition \ref{prop:skew=weks}).

\begin{thm}\label{thm:weks} We have  
\begin{equation}\label{weks:3} \gran \sup_{t \in \R} \| \cossotn (t)f - \cweks (t)f\| =0, \qquad  f \in \xo, \end{equation}
where $\cweks (t)\coloneqq R \ced (t)  E_{\al,\be}^\textnormal{weks}$ and  $E_{\alpha,\beta}^\textnormal{weks} f = (\widetilde {f_{\ell}},\widetilde {f_{\textnormal r}})$ is defined by
\begin{align}\label{weks:1.1}
 \widetilde {f_\ell}(x)&= \begin{cases*}f(x),& $x<  0$, \\ 
% f(0-),& $x=0,$\\
\frac{\al-\be}{\apb} f(-x)-\frac{2\be}{\apb} f(x), & $x> 0,$  \end{cases*}
 \end{align}
and
\begin{align}\label{weks:1.2}
  \widetilde {f_{\textnormal r}}(x)&= \begin{cases*} -\frac{2\al}{\apb} f(x)+\frac{\be-\al}{\apb} f(-x) , &  $x< 0$,\\
 %  f(0+),& $x=0$,\\
f(x),& $x>0$.  \end{cases*} \end{align}  
As a consequence, 
\begin{equation*} \gran \sup_{t \ge 0}\big \| \e^{tA^{\textnormal{o-s}}_{n\al,n\be}}  - T(t) f\big\| =0, \qquad  f \in \xo, \end{equation*}
where $T(0)f =f $ and 
\begin{equation*} T(t) f  = {\textstyle \frac 1{2\sqrt{\pi t}}} \int_{-\infty}^\infty \e^{-\frac {s^2}{4t}} \cweks (s) f  \ud s, \qquad t >0, f \in \xo. \end{equation*} 
\end{thm}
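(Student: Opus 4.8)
The plan is to follow the architecture of the proof of Theorem~\ref{thm:skew}(b). Both families in \eqref{weks:3} are produced by the \emph{same} Cartesian product family through an abstract Kelvin formula: $\cossotn(t)=R\,\ced(t)\,\esopn$ by \eqref{gener:B}, while $\cweks(t)=R\,\ced(t)\,E_{\al,\be}^{\textnormal{weks}}$ by definition. Since $\|R\|=\|\ced(t)\|=1$ for every $t$, we obtain at once
\[
\sup_{t\in\R}\bigl\|\cossotn(t)f-\cweks(t)f\bigr\|\le\bigl\|\esopn f-E_{\al,\be}^{\textnormal{weks}}f\bigr\|,\qquad f\in\xo,
\]
so the whole cosine-family statement \eqref{weks:3} reduces to the single norm convergence
\begin{equation}\label{weks:plan}
\gran\bigl\|\esopn f-E_{\al,\be}^{\textnormal{weks}}f\bigr\|=0,\qquad f\in\xo,
\end{equation}
with the uniformity in $t\in\R$ coming for free. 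On the branches $x<0$ (left extension) and $x>0$ (right extension) both operators return $f$ unchanged, so only the two nontrivial branches require attention.

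For the left extension and $x>0$, the value of $\esopn f$ is
\[
-f(-x)-2f(0+)e_{n(\al+\be)}(x)-2\,e_{n(\al+\be)}*[n\be f-n\al f^\es](x).
\]
Writing $h\coloneqq\be f-\al f^\es\in\cerp$, the convolution term equals $-\tfrac{2}{\apb}\,n(\al+\be)\,e_{n(\al+\be)}*h(x)$. I would split $h=\psi+h(0+)$ with $\psi(x)\coloneqq h(x)-h(0+)$, so that $\psi\in\cerp$ and $\psi(0)=0$. Then $n(\al+\be)\,e_{n(\al+\be)}*\psi\to\psi$ uniformly on $\R^+$ by Lemma~\ref{lem:dirnew}(c), while $n(\al+\be)\,e_{n(\al+\be)}*1(x)=1-e_{n(\al+\be)}(x)$, whence
\[
-\tfrac{2}{\apb}\,n(\al+\be)\,e_{n(\al+\be)}*h(x)=-\tfrac{2}{\apb}h(x)+\tfrac{2}{\apb}h(0+)\,e_{n(\al+\be)}(x)+o(1),
\]
uniformly in $x\ge0$. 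Here is the decisive point: because $f\in\xo$ satisfies $f(0-)=-f(0+)$, one has $h(0+)=\be f(0+)-\al f(0-)=(\al+\be)f(0+)$, so the stray boundary term is exactly $2f(0+)e_{n(\al+\be)}(x)$ and cancels the explicit term $-2f(0+)e_{n(\al+\be)}(x)$ in $\esopn f$. What remains converges uniformly to $-f(-x)-\tfrac{2}{\apb}h(x)$, and a one-line rearrangement rewrites this as $\tfrac{\al-\be}{\apb}f(-x)-\tfrac{2\be}{\apb}f(x)$, i.e.\ as the right-hand side of \eqref{weks:1.1}. The right extension is treated identically after $x\mapsto-x$: the term $+2f(0+)e_{n(\al+\be)}(-x)$ now annihilates the boundary layer of $+2\,e_{n(\al+\be)}*[n\be f-n\al f^\es](-x)$, and the surviving uniform limit is \eqref{weks:1.2}. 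Together these give \eqref{weks:plan}.

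The main obstacle is this boundary-layer cancellation. Neither the explicit exponential term nor the convolution term converges uniformly near $x=0$ on its own; only their exact algebraic cancellation, forced by the constraint $f(0+)=-f(0-)$ defining $\xo$, restores uniform convergence. This is precisely the structural reason why the extension operator $\esop$ had to carry the correction $\mp2f(0+)e_{\al+\be}(\pm x)$: it is there to cancel the boundary layer produced by the convolution as $n\to\infty$. The technical device that makes the argument rigorous is the splitting $h=\psi+h(0+)$, which isolates this layer and lets Lemma~\ref{lem:dirnew}(c) be applied only to the remainder $\psi$, which vanishes at the origin.

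Finally, the semigroup statement follows from \eqref{weks:plan} exactly as in Remark~\ref{rem:one}. Given $\eps>0$, choose $n_0$ with $\|\cossotn(s)f-\cweks(s)f\|\le\eps$ for all $n\ge n_0$ and all $s\in\R$; inserting this bound into the Weierstrass formula \eqref{cosf:1} for $\e^{tA^{\textnormal{o-s}}_{n\al,n\be}}f$ and for $T(t)f$ and using $\tfrac1{2\sqrt{\pi t}}\int_{-\infty}^\infty\e^{-s^2/4t}\ud s=1$ yields $\|\e^{tA^{\textnormal{o-s}}_{n\al,n\be}}f-T(t)f\|\le\eps$ for every $t>0$ (and trivially for $t=0$), which is the asserted convergence, uniform in $t\ge0$.
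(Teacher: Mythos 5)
Your proof is correct and follows essentially the same route as the paper: reduce \eqref{weks:3} to norm convergence of the extension operators via the Kelvin formula (since $\|R\|=\|\ced(t)\|=1$), establish that convergence with Lemma \ref{lem:dirnew}(c) using the key identity $\be f(0+)-\al f(0-)=(\al+\be)f(0+)$ valid on $\xo$, and deduce the semigroup statement as in Remark \ref{rem:one}. The only cosmetic difference is that you split $h=\psi+h(0+)$ to isolate and cancel the boundary layer by hand, whereas the paper applies Lemma \ref{lem:dirnew}(c) directly to $\phi(x)=\be f(x)-\al f(-x)$, whose statement already carries exactly the matching correction term $\e^{-nax}\varphi(0)$.
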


\begin{proof} It suffices to show \eqref{weks:3}, because the rest then can be proved as in Remark \ref{rem:one}. Indeed, $A^{\textnormal{o-s}}_{n\al,n\be}$ is the generator of $\cossotn$, and $\cweks$ is a cosine family as a limit of cosine families. But, to establish \eqref{weks:3} it suffices to check that 
\begin{equation}\label{weks:5} \gran \esopn  f = E_{\alpha,\beta}^{\textnormal{weks}} f, \qquad f \in \xo \end{equation}
for $\esop$ defined in the preceding section.

To this end, we note that, for $f\in \xo$,  the continuous function $\phi$ on $\R^+$ determined by $\phi (x) = \be f(x) - \al f(-x), x> 0$, (in particular $\phi (0) =(\alpha+\beta)f(0+)$) belongs to $\cerp$. Therefore, by
Lemma \ref{lem:dirnew} (c), 
\[ f(0+)e_{n(\alpha+\beta)}(x)+n e_{n(\su)} * [\be f-\al f^\es](x) \]
 converges, as $n\to \infty$, to $(\alpha+\beta)^{-1}[\be f(x)-\al f^\es(x)]$ uniformly in $x\ge 0.$
 It follows that the first coordinate of $ \esopn f $ converges in the norm of $\cer$ to $\wfl$ defined by  \eqref{weks:1.1}.
Since, similarly, the second coordinate  of $ \esopn f $ converges to $ \widetilde {f_{\textnormal r}}$ of \eqref{weks:1.2}, the proof is complete. 
 \end{proof}

%\begin{rem}\label{weks:gen}
%The limit cosine family  $\{J^{-1} \cweks (t)J, \, t\in \R\}$ is generated by the isomorphic image of the generator $A_{\be,\al}^{\textnormal{skew}}$ in $\xo$, that is, the operator  
%$\aweks$ defined by $\aweks f = f'',$
%on the domain composed of $f\in \xo$  satisfying the following conditions: 
%\begin{itemize}
%\item[(a)] $f$ is twice continuously differentiable in both
%  $(-\infty,0)$ and $(0,\infty)$, separately, with left-hand and
%  right-hand derivatives at $x=0$, respectively, 
%\item[(b)] both the limits $\lim_{x\to \infty} f'' (x)$ and
%  $\lim_{x\to -\infty} f''(x) $ exist and are finite,
% \item[(c)] $\be f'(0+) = -\al f'(0-)$ and $f''(0+)=-f''(0-).$   \end{itemize}
%\end{rem}

The space $\xo$ in which $\cweks$ is defined is somewhat unusual, and it is perhaps the main reason why $\cweks$  remains a bit mysterious even if a direct characterization of its generator is provided (that characterization can be given, but does not seem to be very informative).  Fortunately, $\xo$ is isometrically isomorphic to the space $\cer$ where a number of  semigroups and cosine families can be interpreted probabilistically, and it turns out that the image of $\cweks$ in $\cer$ coincides with $\cskewr$---that is, describes the skew Brownian motion with the roles of $\alpha$ and $\beta$ exchanged.  

To make these remarks more precise,  we introduce the isometric isomorphism 
\[ J\colon\xo \to \cer \]
of   $\xo$ and $\cer$, given by $J f (x)= -f(x), x<0$, $Jf(x)=f(x), x >0$ and $Jf(0)= f(0+)$. In terms of $J$ our result takes the following form.  
\begin{prop}\label{prop:skew=weks} For all $t\in \R$, 
\[  J \cweks (t)J^{-1}=\cskewr (t). \]

\end{prop}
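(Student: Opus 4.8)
The plan is to reduce everything to the abstract Kelvin representations of the two cosine families together with a single algebraic relation between their extension operators. By Theorem~\ref{thm:weks} we have $\cweks(t)=R\ced(t)E_{\al,\be}^{\textnormal{weks}}$, while Proposition~\ref{tfct:prop2}, applied with the roles of $\alpha$ and $\beta$ interchanged, gives $\cskewr(t)=R\ced(t)E_{\be,\al}^{\textnormal{skew}}$. Since both families are $R\ced(t)(\cdot)$ sandwiched between an extension operator and the restriction $R$, and $\ced(t)$ acts coordinatewise through the linear operator $C(t)$, I would first record how $J$ interacts with these three building blocks and then assemble them.

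The central step is the algebraic identity
\[ E_{\al,\be}^{\textnormal{weks}} = \Theta\, E_{\be,\al}^{\textnormal{skew}}\, J, \]
where $\Theta\colon \ce\to\ce$ denotes the sign flip on the first coordinate, $\Theta(h_1,h_2)=(-h_1,h_2)$. To verify it I would fix $f\in\xo$, set $g\coloneqq Jf$ (so $g(x)=-f(x)$ for $x<0$ and $g(x)=f(x)$ for $x>0$), and substitute $f(x)=-g(x)$ on the negative half-axis and $f(x)=g(x)$, $f(-x)=-g(-x)$ on the positive one into the piecewise formulas \eqref{weks:1.1}--\eqref{weks:1.2}. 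A direct comparison with \eqref{skew:4.1}--\eqref{skew:4.2} (with $\alpha$ and $\beta$ exchanged) then shows that the first coordinate of $E_{\al,\be}^{\textnormal{weks}}f$ equals minus the first coordinate of $E_{\be,\al}^{\textnormal{skew}}g$, whereas the two second coordinates coincide identically; this is precisely the claimed identity. This bookkeeping of signs in the piecewise definitions is the only genuinely computational part, and it is the main obstacle, though a routine one once the substitution is carried out.

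The two auxiliary relations are immediate. First, $\Theta$ commutes with $\ced(t)$, because $C(t)$ is linear and $\Theta$ merely changes the sign of one coordinate; hence $\ced(t)\Theta=\Theta\ced(t)$. Second, I would invoke the fact (the Remark following Proposition~\ref{tfct:prop2}) that $\cskewr$ maps $\cer$ into $\cer$: for $f\in\xo$ the pair $(p_1,p_2)\coloneqq\ced(t)E_{\be,\al}^{\textnormal{skew}}(Jf)$ satisfies $R(p_1,p_2)=\cskewr(t)Jf\in\cer$, so $p_1(0-)=p_2(0+)$. For such a pair one checks straight from the definitions of $R$, $\Theta$ and $J$ that $R\Theta(p_1,p_2)\in\xo$ and that $JR\Theta(p_1,p_2)=R(p_1,p_2)$.

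Assembling these ingredients yields, for every $f\in\xo$,
\begin{align*}
J\cweks(t)f &= J R\ced(t) E_{\al,\be}^{\textnormal{weks}} f = J R\,\ced(t)\,\Theta\, E_{\be,\al}^{\textnormal{skew}}(Jf) \\
&= J R\,\Theta\,\ced(t) E_{\be,\al}^{\textnormal{skew}}(Jf) = R\,\ced(t) E_{\be,\al}^{\textnormal{skew}}(Jf) = \cskewr(t)\,Jf,
\end{align*}
which is exactly the asserted $J\cweks(t)J^{-1}=\cskewr(t)$ on $\cer$, valid for all $t\in\R$.
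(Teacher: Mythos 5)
Your proof is correct and follows essentially the same route as the paper: your sign-flip operator $\Theta$ is exactly the paper's automorphism $\mathcal J(f_1,f_2)=(-f_1,f_2)$, your central identity $E_{\al,\be}^{\textnormal{weks}}=\Theta E_{\be,\al}^{\textnormal{skew}}J$ is the paper's identity (i) written with $J$ moved to the other side, and your two auxiliary relations are the paper's (ii) and (iii). The concluding chain of equalities is the same computation.
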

\newcommand{\eweks}{E_{\al,\be}^\textnormal{weks}}
\begin{proof}
Let $\mc J$ be the isometric automorphism of  $\ce $ given by $\mc J (f_1,f_2) = (-f_1,f_2)$. It is clear that  $\mathcal{J}$ is its own inverse. We note the following three identities that can be established by a straightforward calculation. 
\begin{itemize} 
\item [(i) ] $ \eweks J^{-1}= \mc J E_{\be,\al}^\textnormal{skew} $.
\item [(ii) ] $\mc J\ced (t) \mc J = \ced (t),t\in \R$.
\item [(iii) ]$ JR(f_1,f_2) =R \mc J (f_1,f_2),$ provided that $f_1(0)=-f_2(0)$. 
\end{itemize}
Hence, 
\begin{align*} J \cweks (t)J^{-1} &= (J R) \ced (t) (E_{\alpha,\beta}^{\textnormal{weks}} J^{-1}) = (R\mc J) \ced (t) (\mc J E_{\be,\al}^\textnormal{skew})\\ &= R\ced (t) E_{\be,\al}^\textnormal{skew}=\cskewr (t), \end{align*}
as desired. 
\end{proof}

We have established that, as $n\to \infty$, along with $\cosson$ converge also  complementary families $\cossotn$; the first of these families converge to $\cskew$, the second to a mirror image of $\cskewr$.

\begin{rem} In Theorem \ref{thm:weks} (a) the rate of convergence can also be estimated. Arguing as in Remark \ref{rem:rate} we can prove that, provided that $Jf$ is Lipschitz continuous with Lipschitz constant $L$, $\sup_{t \in \R} \| \cossotn (t)f - \cweks (t)f\| \le \frac Kn$ for $K\coloneqq \frac{2L}{\alpha+\beta} $. \end{rem}

\section{Additional results}\label{sec:ar}
To complete the paper, we need to answer two natural questions. First of all, as a direct consequence of Theorem \ref{conv_proj1} (a), 
\[ \gran \qn (f_1,f_2) = \qweks (f_1,f_2) \]
where $\q$ was defined in \eqref{defq} whereas 
\[ \qweks (f_1,f_2) \coloneqq \big(f_1^o -\textstyle{\frac{2\al}{\gamma^2}} k_1+\pol k_2, f_2^o-\textstyle{\frac{2\be}{\gamma^2}} k_1-\pol k_2\big).\]
It is natural to ask whether $\qweks$ has anything to do with $\cweks$. As the following result shows, the answer is in the affirmative: $\qweks$ projects on the invariant space of extensions related to $\cweks$, that is, on the range of the operator $\eweks$.

\begin{prop} \label{conv_proj2}
The map $\qweks$ is a projection on the space
\[  \zzz \coloneqq \{ (g_1,g_2) \in \ce\colon g_1^e=-g_2^e, \al g_1^o= -\be g_2^o\}.\] 
Moreover, $\zzz$ is the range of $\eweks$.  
\end{prop}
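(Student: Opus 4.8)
The plan is to mirror the two-part argument already used in Theorem \ref{conv_proj1} (b): first show that $\qweks$ is a projection and identify its range as $\zzz$, then show that $\zzz$ equals the range of $\eweks$. Since $\qweks = I_\ce - \pskew$ pointwise (the limit of $\qn = I_\ce - \pn$), it is automatically a projection, being the complementary projection to $\pskew$; its range is precisely the kernel of $\pskew$, which by Theorem \ref{conv_proj1} (b) is the set of $(f_1,f_2)\in\ce$ with $\pskew(f_1,f_2)=0$. I would, however, prefer to verify the range characterization directly from the explicit formula for $\qweks$, exactly as was done for $\pskew$: decompose a generic $(g_1,g_2)=\qweks(f_1,f_2)$ into even and odd parts using that $k_1=\alpha f_1^o+\beta f_2^o$ is odd and $k_2=f_1^e-f_2^e$ is even. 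Computing $g_1^e = f_1^e + \pol k_2 = \pol(f_1^e+f_2^e)$ and $g_2^e = f_2^e - \pol k_2 = \pol(f_1^e+f_2^e)$ shows $g_1^e=-g_2^e$ forces $f_1^e+f_2^e=0$; meanwhile $g_1^o = f_1^o - \frac{2\al}{\gamma^2}k_1$ and $g_2^o = f_2^o - \frac{2\be}{\gamma^2}k_1$, and checking $\al g_1^o + \be g_2^o$ collapses to $0$ after substituting $k_1$ and $\gamma^2=2(\al^2+\be^2)$. This establishes that the range of $\qweks$ is contained in $\zzz$; the reverse inclusion follows by assuming $g_1^e=-g_2^e$ and $\al g_1^o=-\be g_2^o$, showing $\qweks$ acts as the identity on such pairs.

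For the second assertion, that $\zzz$ is the range of $\eweks$, I would read off the defining relations of $\eweks$ from \eqref{weks:1.1} and \eqref{weks:1.2} and translate them into even/odd conditions, exactly paralleling part (c) of the proof of Theorem \ref{conv_proj1}. Writing $(g_1,g_2)=\eweks f$, the two case-formulas say $g_1(x)=\frac{\al-\be}{\apb}g_1(-x)-\frac{2\be}{\apb}g_2(x)$ and $g_2(-x)=-\frac{2\al}{\apb}g_2(x)+\frac{\be-\al}{\apb}g_1(-x)$ for $x\ge 0$. Multiplying through by $\alpha+\beta$, rearranging via \eqref{elek}, and summing the two resulting relations should yield $g_1^e=-g_2^e$, and then back-substitution should give $\al g_1^o=-\be g_2^o$ on $\R^+$; since these are even/odd relations they automatically extend from $\R^+$ to all of $\R$. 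This shows the range of $\eweks$ is contained in $\zzz$, and the reverse inclusion — that every element of $\zzz$ arises as an extension — follows because $\zzz$ has the same ``dimension'' (i.e.\ is cut out by the same two independent linear relations) and $\eweks$ is injective on $\xo$.

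The main obstacle I anticipate is purely bookkeeping: the even/odd algebra in the range computation for $\qweks$ must be handled carefully because the coefficients $\frac{2\al}{\gamma^2}$ and $\frac{2\be}{\gamma^2}$ interact with $k_1=\al f_1^o+\be f_2^o$ in a way that only simplifies after invoking $\gamma^2=2(\al^2+\be^2)$. Concretely, the identity $\al\bigl(f_1^o-\frac{2\al}{\gamma^2}k_1\bigr)+\be\bigl(f_2^o-\frac{2\be}{\gamma^2}k_1\bigr)=0$ requires expanding $\al^2 f_1^o + \be^2 f_2^o$ against $\frac{2(\al^2+\be^2)}{\gamma^2}k_1 = k_1 = \al f_1^o+\be f_2^o$, which does not cancel termwise and instead relies on the global cancellation $\al f_1^o+\be f_2^o - (\al f_1^o + \be f_2^o)=0$ only after the cross-terms are grouped correctly. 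I would therefore present this cancellation explicitly rather than leave it to the reader. Everything else — that $\qweks$ is a projection, and the extension-to-even/odd translation — is routine once the framework of Theorem \ref{conv_proj1} is in place, so I would keep those parts terse and refer back to the earlier proof.
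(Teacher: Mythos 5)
Your plan for the first assertion coincides with the paper's own proof: $\qweks$ is a projection because it is a strong limit of the projections $\qn$, and its range is identified by splitting $\qweks(f_1,f_2)$ into even and odd parts. For the second assertion you take a genuinely different route: the paper does not recompute anything from \eqref{weks:1.1}--\eqref{weks:1.2}, but rewrites identity (i) from the proof of Proposition \ref{prop:skew=weks} as $E_{\al,\be}^{\textnormal{weks}}=\mathcal J E_{\be,\al}^{\textnormal{skew}}J$ and transports the characterization of the range of $E_{\be,\al}^{\textnormal{skew}}$ already obtained in Theorem \ref{conv_proj1} (b) through the involution $\mathcal J$. Your direct computation parallels part (c) of Theorem \ref{conv_proj1} and does work; the paper's conjugation argument is shorter because it reuses earlier results.

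Two concrete points need repair, though. First, your even-part computation is not the one for $\qweks$: the first coordinate of $\qweks(f_1,f_2)$ is $f_1^o-\frac{2\al}{\gamma^2}k_1+\pol k_2$, whose even part is simply $\pol k_2=\pol(f_1^e-f_2^e)$ (both $f_1^o$ and $k_1$ are odd), and likewise $g_2^e=-\pol k_2$; hence $g_1^e=-g_2^e$ holds \emph{identically}, for every input. Nothing is ``forced'' on $(f_1,f_2)$ --- the sentence ``$g_1^e=-g_2^e$ forces $f_1^e+f_2^e=0$'' conflates constraining the input with describing the output, and the formulas $g_1^e=f_1^e+\pol k_2$, $g_2^e=f_2^e-\pol k_2$ belong (up to signs) to $\pskew$, not $\qweks$. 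Your odd-part cancellation $\al g_1^o+\be g_2^o=k_1-\frac{2(\al^2+\be^2)}{\gamma^2}k_1=0$ is correct. Second, the reverse inclusion $\zzz\subset$ range of $E_{\al,\be}^{\textnormal{weks}}$ cannot be settled by a ``same number of linear relations'' count in this infinite-dimensional setting. The clean fix, exactly as in part (c) of Theorem \ref{conv_proj1}, is to show that the relations extracted from \eqref{weks:1.1}--\eqref{weks:1.2} are \emph{equivalent} to $g_1^e=-g_2^e$, $\al g_1^o=-\be g_2^o$; then every $(g_1,g_2)\in\zzz$ equals $E_{\al,\be}^{\textnormal{weks}}R(g_1,g_2)$ (note that $g_1(0)=-g_2(0)$ follows from $g_1^e=-g_2^e$, so $R(g_1,g_2)\in\xo$). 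Finally, the second relation you read off should be $g_2(-x)=-\frac{2\al}{\apb}g_1(-x)+\frac{\be-\al}{\apb}g_2(x)$ for $x>0$; you have interchanged $g_1(-x)$ and $g_2(x)$.
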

\begin{proof} We start by noting that $\qweks$ is a projection, being the limit of projections. Hence, to prove the first statement it suffices to find the range of $\qweks$. To this end, for $(f_1,f_2) \in \ce$, let $(g_1,g_2)= \qweks(f_1,f_2) $, that is,
$g_1=f_1^o-\textstyle{\frac{2\al}{\gamma^2}} k_1+\pol k_2$ and $g_2= f_2^o-\textstyle{\frac{2\be}{\gamma^2}} k_1-\pol k_2$.
The fact that $g_1,g_2\in \cer$ follows from point (b) in Theorem \ref{conv_proj1}. Also,
as in  the proof of that result, we obtain
$\gl^e=\pol(f_1^e-f_2^e)=-\gr^e$ and $\al g_1^o=\frac{2\al\be}{\gamma^2}(\be f_1 ^o-\al f_2^o)= -\be g_2^o$ 
implying that $(\gl,\gr) \in \zzz.$ 
Moreover, it is easy to check that, if  $(f_1,f_2)\in \zzz $, then $\gl=f_1$ and $\gr= f_2$. This proves the first statement in the proposition. 

Identity (i) presented in the proof of Theorem \ref{prop:skew=weks} can be rewritten as  $ \eweks = \mc J E_{\be,\al}^\textnormal{skew} J$. Since the range of $J$ is the entire $\cer$, the range of $\eweks$ is the image of the range of  $ E_{\be,\al}^\textnormal{skew} $ under $\mc J$. However, the range of   $ E_{\be,\al}^\textnormal{skew} $ is characterized in Theorem \ref{conv_proj1} (b). This characterization reveals that the image of the range of   $ E_{\be,\al}^\textnormal{skew} $ is precisely $\zzz$ as defined above. \end{proof}

\begin{cor}\label{tcztery} The space $\ce $ is a direct sum of two spaces that are invariant under the basic Cartesian product cosine family: 
\begin{align*} \ce = \yy \oplus \zzz .\end{align*} \end{cor}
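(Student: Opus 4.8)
The plan is to assemble Corollary \ref{tcztery} directly from the two projection results already in hand, namely Theorem \ref{conv_proj1} and Proposition \ref{conv_proj2}, together with the defining relation \eqref{defq}. The algebraic content is that whenever a bounded projection $P$ on a Banach space has complementary projection $I-P$, the space splits as the direct sum of the range of $P$ and the range of $I-P$. Here the relevant projection is $\pskew$: by Theorem \ref{conv_proj1}(a) it is the strong limit $\gran \pn$, and by part (b) its range is $\yy$. The complementary map is $\qweks = I_\ce - \pskew$, which is the strong limit $\gran \qn$ of the complementary projections $\qn = I_\ce - \pn$; by Proposition \ref{conv_proj2} its range is $\zzz$. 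Since $\pskew$ and $\qweks = I_\ce-\pskew$ are a complementary pair of projections, we immediately get $\ce = \yy \oplus \zzz$.

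Concretely, the first step is to record that $\pskew + \qweks = I_\ce$, so that every $(f_1,f_2)\in\ce$ decomposes as $(f_1,f_2) = \pskew(f_1,f_2) + \qweks(f_1,f_2)$, exhibiting it as a sum of an element of $\yy$ (the range of $\pskew$, by Theorem \ref{conv_proj1}(b)) and an element of $\zzz$ (the range of $\qweks$, by Proposition \ref{conv_proj2}). This gives $\ce = \yy + \zzz$. The second step is to verify that the sum is direct, i.e.\ $\yy \cap \zzz = \{(0,0)\}$: if $(g_1,g_2)$ lies in both subspaces, then being in $\yy$ it is fixed by $\pskew$ (that is, $\pskew(g_1,g_2)=(g_1,g_2)$ by the range characterisation in Theorem \ref{conv_proj1}(b)), while being in $\zzz$ it is annihilated by $\pskew$ since $\qweks(g_1,g_2)=(g_1,g_2)$ forces $\pskew(g_1,g_2) = (g_1,g_2)-\qweks(g_1,g_2) = (0,0)$; comparing the two gives $(g_1,g_2)=(0,0)$. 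Alternatively one can read the intersection off the defining conditions: membership in $\yy$ demands $g_1^e=g_2^e$ and $\be g_1^o = \al g_2^o$, while membership in $\zzz$ demands $g_1^e = -g_2^e$ and $\al g_1^o = -\be g_2^o$, and solving these simultaneously forces $g_1^e=g_2^e=0$ and $g_1^o=g_2^o=0$, hence $g_1=g_2=0$.

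Finally, the invariance claim in the statement — that $\yy$ and $\zzz$ are invariant under the basic Cartesian product cosine family $\{\ced(t),\,t\in\R\}$ — has effectively already been supplied: $\yy$ is the range of $\eskew$ (Theorem \ref{conv_proj1}(c) identifies $\yy$ with the subspace of skew extensions, whose invariance under $\ced(t)$ was noted in the Remark following Proposition \ref{tfct:prop2}), and $\zzz$ is the range of $\eweks$ by Proposition \ref{conv_proj2}, which is invariant under $\ced(t)$ because $\cweks(t) = R\ced(t)\eweks$ is a well-defined cosine family in $\xo$. So the invariance requires only a pointer back to these facts rather than a fresh argument.

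I do not expect any real obstacle here; the corollary is a packaging of earlier results. The only point demanding a line of genuine (if trivial) verification is the directness of the sum, and even there the cleanest route is the projection-theoretic one: a complementary pair of bounded idempotents automatically yields a topological direct sum of their ranges, so $\yy \cap \zzz = \{(0,0)\}$ is forced and no separate computation with the parity conditions is strictly necessary. The main thing to be careful about is simply citing the correct parts of Theorem \ref{conv_proj1} and Proposition \ref{conv_proj2} for the two range characterisations and for the fact that $\qweks = I_\ce - \pskew$.
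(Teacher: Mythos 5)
Your proposal is correct and coincides with the paper's treatment: the paper states Corollary \ref{tcztery} without proof, precisely because it is the immediate packaging of Theorem \ref{conv_proj1} (the projection $\pskew$ onto $\yy$) and Proposition \ref{conv_proj2} (the complementary projection $\qweks=I_{\ce}-\pskew$ onto $\zzz$) that you spell out, with the invariance of $\yy$ and $\zzz$ under $\ced(t)$ already supplied by the remark following Proposition \ref{tfct:prop2} and by Proposition \ref{prop:skew=weks}. Your explicit verification of directness (either via the complementary idempotents or via the parity conditions) is a harmless elaboration of what the paper leaves implicit.
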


The second, and final, question one need to answer in this paper reads: is there a counterpart of Proposition \ref{prop:srednie} pertaining to the complementary cosine families $\cossot$? The answer is given below; we omit the proof of this statement, since it is very similar to that of Proposition \ref{prop:srednie}.
\newcommand{\nab}{N_{\alpha,\beta}}

\begin{prop}\label{prop:srednie2} For all $f\in \xo$, 
\begin{align*} \grat t^{-1}\int_0^t \cossot (s) f \ud s =
\grat t^{-1}\int_0^t \cweks (s) f \ud s &=  \nab f,  \end{align*}
where $\nab f \in \xo$ is defined as follows
\[ \nab f (x) \coloneqq \begin{cases}
{\textstyle \frac \alpha{\alpha + \beta}} f(-\infty) - {\textstyle \frac \beta{\alpha + \beta}} f(\infty),& x<0, \\
{\textstyle \frac \beta{\alpha + \beta}} f(\infty)  - {\textstyle \frac \alpha{\alpha + \beta}} f(-\infty), & x>0. \end{cases}     \]  \end{prop}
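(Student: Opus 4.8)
The plan is to mirror the proof of Proposition~\ref{prop:srednie}, exploiting the Kelvin representations $\cossot(t)=R\ced(t)\esop$ from \eqref{gener:B} and $\cweks(t)=R\ced(t)\eweks$ from Theorem~\ref{thm:weks}. Since $R$ is bounded and the Ces\`aro means of the basic Cartesian product cosine family $\{\ced(s),\,s\in\R\}$ converge in the bounded pointwise topology---by \eqref{ab:1} applied in each coordinate---I would pull $R$ outside the time average and reduce both identities to evaluating the mean $M$ of the basic cosine family on each coordinate of the two extension operators and then applying $R$ to the resulting pair of constants. Concretely, the two averages become
\[ R\Big(\grat t^{-1}\!\int_0^t \ced(s)\esop f \ud s\Big)\quad\text{and}\quad R\Big(\grat t^{-1}\!\int_0^t \ced(s)\eweks f \ud s\Big). \]

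First I would treat $\cweks$. Writing $\eweks f=(\wfl,\wfr)$ as in \eqref{weks:1.1}--\eqref{weks:1.2}, I would read off the four boundary limits directly from the defining cases: $\wfl(-\infty)=f(-\infty)$, $\wfl(\infty)=\frac{\al-\be}{\apb}f(-\infty)-\frac{2\be}{\apb}f(\infty)$, $\wfr(\infty)=f(\infty)$ and $\wfr(-\infty)=-\frac{2\al}{\apb}f(-\infty)+\frac{\be-\al}{\apb}f(\infty)$. Then \eqref{ab:1} gives $M\wfl=\pol(\wfl(-\infty)+\wfl(\infty))=\frac{\al}{\apb}f(-\infty)-\frac{\be}{\apb}f(\infty)$ and, analogously, $M\wfr=\frac{\be}{\apb}f(\infty)-\frac{\al}{\apb}f(-\infty)$. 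Applying $R$---which selects the first coordinate on $(-\infty,0)$ and the second on $(0,\infty)$---reproduces exactly the piecewise definition of $\nab f$; since $M\wfr=-M\wfl$, this also confirms $\nab f(0+)=-\nab f(0-)$, so that $\nab f\in\xo$.

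The computation for $\cossot$ runs in parallel, the only additional ingredient being the convolution terms in $\esop$. Here I would invoke the asymptotic identity already used for Proposition~\ref{prop:srednie}, namely $\lim_{x\to\infty}e_\su*g(x)=\frac{1}{\apb}g(\infty)$ for $g\in\cerp$; taking $g=\be f-\al f^\es$, so that $g(\infty)=\be f(\infty)-\al f(-\infty)$, and using that the boundary terms $f(0+)e_\su(x)$ and $f(0+)e_\su(-x)$ decay to $0$, one finds that the limits of $\esop f=(\wfl,\wfr)$ at $\pm\infty$ coincide with those obtained above for $\eweks f$; in particular $\wfl(\infty)=\frac{\al-\be}{\apb}f(-\infty)-\frac{2\be}{\apb}f(\infty)$. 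Hence $M\wfl$ and $M\wfr$ are unchanged, and $R$ again yields $\nab f$, completing both equalities.

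I do not expect a genuine obstacle, since every step parallels Proposition~\ref{prop:srednie}. The one nonroutine point is the interchange of $R$ with the Ces\`aro limit, which is legitimate because $R$ is continuous for the bounded pointwise topology in which \eqref{ab:1} holds. The only place demanding care is the bookkeeping of the four limits at $\pm\infty$, so that the asymmetry between $\al$ and $\be$ is transcribed into the correct branch of $\nab f$; this is precisely where the convolution-limit identity converts the directional permeabilities into the weights $\frac{\al}{\apb}$ and $\frac{\be}{\apb}$.
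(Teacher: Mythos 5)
Your proposal is correct and follows exactly the route the paper intends: the authors omit the proof of Proposition \ref{prop:srednie2}, stating only that it is very similar to that of Proposition \ref{prop:srednie}, and your argument is precisely that adaptation (apply the mean \eqref{ab:1} coordinatewise to the extensions $\esop f$ and $\eweks f$, use $\lim_{x\to\infty}e_\su*g(x)=\frac{1}{\apb}g(\infty)$ for the convolution terms, then apply $R$). Your boundary-limit bookkeeping checks out and reproduces $\nab f$ in both cases.
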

We note that this result agrees nicely with Proposition \ref{prop:srednie} and Proposition \ref{prop:skew=weks}, for we have 
\begin{align*} \grat t^{-1}\int_0^t  \cskewr (s) f \ud s &=
J \grat t^{-1}\int_0^t  \cweks (t)J^{-1} f \ud s\\& =  J \nab J^{-1}f = M_{\beta,\alpha} f. \end{align*}

\section{Appendix}

The following lemma is indispensable in proving our convergence theorems.
\begin{lem}\label{lem:dirnew}Let $a>0$, $\phi \in\cer$ and $\varphi \in \cerp$ be given. Then, 
\begin{itemize}
\item [(a)] $\displaystyle \lim_{n \to \infty} n a \int_{-\infty}^x \e^{-n a (x-y)}\phi (y) \ud y = \phi (x), $ \\
\item [(b)] $ \displaystyle \lim_{n \to \infty}  n a \int_x^{\infty} \e^{n a (x-y)}\phi (y) \ud y = \phi (x) 
$, \item [(c)] \( \displaystyle  \lim_{n \to \infty} \left [ na \int_0^x \e^{-na (x-y)}\varphi (y) \ud y + \e^{-na x}\varphi (0)\right ]= \varphi (x). \)
\end{itemize}
The first two limits are uniform with respect to $x\in \R$, the third is uniform with respect to $x\ge 0.$
\end{lem}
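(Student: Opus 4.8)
The plan is to prove all three parts by interpreting the kernels probabilistically and exploiting the fact that an exponential random variable with parameter $na$ concentrates at $0$ as $n\to\infty$, together with uniform continuity of the functions on the relevant compactified intervals. For part (a), I would substitute $u = n a (x-y)$, so that $na\int_{-\infty}^x \e^{-na(x-y)}\phi(y)\dd y = \int_0^\infty \e^{-u}\phi\bigl(x - \tfrac{u}{na}\bigr)\dd u$. Since $\int_0^\infty \e^{-u}\dd u = 1$, the difference from $\phi(x)$ equals $\int_0^\infty \e^{-u}\bigl[\phi(x-\tfrac{u}{na})-\phi(x)\bigr]\dd u$. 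The key point is that $\phi \in \cer$ is \emph{uniformly} continuous on $[-\infty,\infty]$ (being continuous on a compact space), so given $\eps>0$ there is $\delta>0$ with $|\phi(s)-\phi(x)|\le\eps$ whenever $|s-x|\le\delta$, uniformly in $x$. Splitting the $u$-integral at $u = na\,\delta$ and bounding $\phi$ by $\|\phi\|$ on the tail (whose $\e^{-u}$-mass is $\e^{-na\delta}\to 0$) yields the uniform limit. Part (b) is identical after the reflection $y\mapsto -y$, or the substitution $u = na(y-x)$.

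For part (c) the situation is slightly more delicate because $\varphi$ lives on the half-line $\R^+$ and the kernel $na\,\e^{-na(x-y)}$ is integrated only over $[0,x]$ rather than $(-\infty,x]$; the added term $\e^{-nax}\varphi(0)$ is precisely what compensates for the missing mass. Substituting $u=na(x-y)$ gives $na\int_0^x\e^{-na(x-y)}\varphi(y)\dd y = \int_0^{nax}\e^{-u}\varphi\bigl(x-\tfrac{u}{na}\bigr)\dd u$, while $\e^{-nax}\varphi(0)=\varphi(0)\int_{nax}^\infty\e^{-u}\dd u$. The plan is to write the sum minus $\varphi(x)$ as
\[
\int_0^{nax}\e^{-u}\Bigl[\varphi\bigl(x-\tfrac{u}{na}\bigr)-\varphi(x)\Bigr]\dd u
+\int_{nax}^\infty\e^{-u}\bigl[\varphi(0)-\varphi(x)\bigr]\dd u,
\]
using $\int_0^\infty\e^{-u}\dd u=1$ to absorb $\varphi(x)$. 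The first integral is controlled exactly as in (a) by uniform continuity of $\varphi$ on $[0,\infty]$. For the second integral, its $\e^{-u}$-mass is $\e^{-nax}$: when $x$ is bounded away from $0$ this tail vanishes, and when $x$ is small the factor $|\varphi(0)-\varphi(x)|$ is itself small by continuity at $0$, so in either regime the term is uniformly negligible.

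The main obstacle, and the step deserving the most care, is establishing uniformity in $x$ over the \emph{entire} line (resp. half-line) rather than on compact sets. This is exactly where membership in $\cer$ and $\cerp$, i.e. the existence of finite limits at $\pm\infty$, is essential: it upgrades ordinary continuity to uniform continuity on the two-point (resp. one-point) compactification, which is what makes the single $\delta$ work simultaneously for all $x$. In part (c) there is the additional subtlety of handling the two competing small-parameter regimes ($x$ near $0$ versus $x$ large) in the tail term; I would treat this by a case split on whether $x\le\delta$ or $x>\delta$, noting that $\e^{-nax}\le\e^{-na\delta}$ in the latter case. Once uniform continuity is in hand, the remaining estimates are the routine split-and-bound arguments sketched above.
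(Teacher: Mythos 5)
Your proposal is correct and follows essentially the same Dirac-sequence argument as the paper: uniform continuity of $\phi$ (coming from its finite limits at $\pm\infty$) plus a split of the exponential kernel at $u=na\delta$, with the tail controlled by $\e^{-na\delta}\|\phi\|$. The only cosmetic difference is in (c), where the paper extends $\varphi$ to all of $\R$ by the constant value $\varphi(0)$ and invokes (a), which packages your explicit two-regime tail estimate into a one-line reduction.
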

\begin{proof} To prove (a),  we apply a typical argument involving so-called Dirac sequences (see e.g. \cite{lang}*{pp. 227--235} or \cite{feller}*{pp. 219--220}). Namely, given $\eps>0$ we can find a $\delta$ such that $|\phi(x)-\phi(y)|< \frac \eps 2$ as long as $|x-y|< \delta$; this is because members of $\cer$ are uniformly continuous. Having chosen such a $\delta$ we can also find an $n_0$ so large that $2 \e^{-na \delta}\|\phi\| < \frac \eps 2$, provided that $n>n_0$, where $\|\phi\|\coloneqq \|\phi\|_{\cer}$.

Next, for any $x \in \R$ we have 
\[ na \int_{-\infty}^x \e^{-na (x-y)} \phi (y) \ud y - \phi (x) =  na \int_0^{\infty} \e^{-na y} [\phi (x-y) -\phi (x)] \ud y .\]
It follows that for $n>n_0$, the absolute value of the left-hand side above does not exceed 
\[ na \int_0^\delta \e^{-na y} {\textstyle \frac \eps 2} \ud y +   na \int_\delta^\infty \e^{-na y} 2\|\phi\| \ud y  = 
{\textstyle \frac \eps 2}(1 - \e^{-na \delta})   + 2 \e^{-na \delta}\| \phi\| < \eps .\]
This completes the proof, $\eps >0$ being arbitrary. 

Condition (b) is  (a) in disguise: (a) becomes (b) if $\phi$ and $x$ are replaced by $\phi^\es$ and $-x$, respectively. Likewise, to obtain (c) we use (a) for $\phi\in \cer$ defined as follows: $\phi (x) = \varphi (x), x \ge 0$ and $\phi (x) = \varphi (0), x <0.$ 
\end{proof}

\begin{rem}\label{rem:arate} The proof presented above provides no rate of convergence in (a)--(c), but suggests that this rate depends on modulus of continuity of $\phi $ and $\varphi$.  If we restrict ourselves to $\phi$ that are Lipschitz continuous, (a) and (b) can be expressed in a more qualitative form. To wit, if $|\phi (x) - \phi (y)|\le L|x-y|$ for all $x,y\in \R$ and certain $L>0$, we have
\begin{align*} \left |n a \int_{-\infty}^x \e^{-n a (x-y)}\phi (y) \ud y  - \phi (x) \right |& = \left | na \int_0^\infty \e^{-na z} [\phi (x-z) - \phi (x)]\ud z \right |\\
&\le L na \int_0^\infty \e^{-na z} z \ud z = \frac L{na}. \end{align*}
Similarly, $| n a \int_x^{\infty} \e^{n a (x-y)}\phi (y) \ud y -\phi (x) | \le \frac L{na}$. Moreover, if $|\varphi (x)-\varphi (y)|\le L$ for $x,y\ge 0$, then $|  na \int_0^x \e^{-na (x-y)}\varphi (y) \ud y + \e^{-na x}\varphi (0) - \varphi (x)| \le \frac L{na}$.
  \end{rem}

%\subsection*{Acknowledgment}
%A.B. was supported by the National Science Centre (Poland) grant \linebreak 
%   2017/25/B/ST1/01804, and partly by  National Science Centre grant \linebreak 
%   2016/21/B/ST1/03071.

\bibliographystyle{plain}
%\bibliography{bibliografiakopia}
\bibliography{bibliografia}

\end{document}